\numberwithin{equation}{subsection} 
\newcommand{\ra}{\rightarrow}
\newcommand{\lra}{\longrightarrow}
\newcommand{\p}{\prime}
\newcommand{\pt}{\partial}
\newcommand{\al}{\alpha}
\newcommand{\Om}{\Omega}
\newcommand{\om}{\omega}
\newcommand{\gam}{\gamma}
\newcommand{\s}{\sigma}
\newcommand{\vp}{\varphi}
\newcommand{\lam}{\lambda}
\newcommand{\Lam}{\Lambda}
\newcommand{\q}{\theta}
\newcommand{\dt}{\delta}
\newcommand{\Zbb}{\mathbb{Z}}
\newcommand{\Cbb}{\mathbb{C}}
\theoremstyle{plain} 
\newtheorem{THM}{Theorem}[section]
\newtheorem{DEF}[THM]{Definition}
\newtheorem{CON}[THM]{Construction}
\newtheorem{QUE}[THM]{Question}
\newtheorem{ILL}[THM]{Illustration}
\newtheorem{PROP}[THM]{Proposition}
\newtheorem{LEM}[THM]{Lemma}
\newtheorem{COR}[THM]{Corollary}
\newtheorem{REM}[THM]{Remark}
\newtheorem*{THMS}{Theorem}
\newcommand{\bt}{\bullet}
\newcommand{\Ac}{\mathcal{A}}
\newcommand{\Bc}{\mathcal{B}}
\newcommand{\img}{\mathrm{im}}
\newcommand{\Vc}{\mathcal{V}}
\newcommand{\Uc}{\mathcal{U}}
\newcommand{\Wc}{\mathcal{W}}
\newcommand{\Oc}{\mathcal{O}}
\newcommand{\red}{\mathrm{red}}
\newcommand{\Cc}{\mathcal{C}}
\newcommand{\Fc}{\mathcal{F}}
\newcommand{\Xfr}{\mathfrak{X}}
\newcommand{\Ec}{\mathcal{E}}
\newcommand{\Tc}{\mathcal{T}}
\newcommand{\Ufr}{\mathfrak{U}}
\newcommand{\Mfr}{\mathfrak{M}}
\newcommand{\Dfr}{\mathfrak{D}}
\newcommand{\Scl}{\mathcal{S}}
\newcommand{\Xc}{\mathcal{X}}
\definecolor{airforceblue}{rgb}{0.36, 0.54, 0.66}
\definecolor{burgundy}{rgb}{0.5, 0.0, 0.13}
\definecolor{majorelleblue}{rgb}{0.38, 0.31, 0.86}
\definecolor{darkblue}{rgb}{0.0, 0.0, 0.55}
\title{
On The Problem of Splitting Deformations of Super Riemann Surfaces
\\}
\author{\small Kowshik Bettadapura}
\date{}
\begin{document}
\maketitle

\begin{abstract}
\noindent 
An odd deformation of a super Riemann surface $\Scl$ is a deformation of $\Scl$ by variables of odd parity. In this article we study the obstruction theory of these odd deformations $\Xc$ of $\Scl$. We view $\Xc$ here as a complex supermanifold in its own right. Our objective in this article is to show, when $\Xc$ is a deformation of second order of $\Scl$ with genus $g>1$: if the primary obstruction class to splitting $\Xc$ vanishes, then $\Xc$ is in fact split. This result leads naturally to a conjectural characterisation of odd deformations of $\Scl$ of any order.
\end{abstract}

\setcounter{tocdepth}{1}
\tableofcontents

\onehalfspacing

\section{Introduction}

\noindent
Super Riemann surfaces were first introduced by Friedan in \cite{FRIEDAN} in the context of superstring theory. Following this, a flurry of activity took place with the aim of establishing super Riemann surfaces as objects of independent, mathematical interest. Some relevant articles are \cite{RABCRANE, RABTORI, FALQMOD}. In these cited works one will find, for super Riemann surfaces: Teichm\"uller theory, the uniformization principle, projective embeddings; and a study of the moduli stack of super Riemann surfaces $\Mfr_g$. It is $\Mfr_g$, which has come to find a resurgence of interest. 
\\\\
A recent result of Donagi and Witten in \cite{DW1, DW2} asserts: $\Mfr_g$, as an object in supergeometry,\footnote{in \cite{WITTRS, DW1} it is mentioned that $\Mfr_g$ should be thought of as the analogue in supergeometry to an orbifold or stack; or as a supermanifold twisted by a $\Zbb_2$-gerbe.} is non-split in genus $g\geq 5$.\footnote{In fact a stronger result is obtained: that a holomorphic projection of $\Mfr_g$ to its reduced space is obstructed. This implies $\Mfr_g$ is itself non-split.} Such a result was suspected by the theoretical physics community for some time. Indeed, in \cite{FALQMOD} a heuristic argument is provided for the non-splitness of $\Mfr_g$ in genus $g\geq 3$. Now while $\Mfr_g$ motivates our considerations in this article, we refrain from making any statements about $\Mfr_g$ itself. Our focus is on classifying the objects it parametrises: super Riemann surfaces. 
\\\\
A deformation of a super Riemann surface $\Scl$ is a complex supermanifold $\Xc$ containing $\Scl$ and equipped with a certain atlas reflecting the structure of $\Scl$, which we call a \emph{superconformal atlas}. This viewpoint is inspired by that of Kodaira and Spencer in their construction of deformations of compact, complex manifolds in \cite{KS}. We are guided throughout this article by the desire to understand the relationship between: 
\begin{enumerate}[(i)]
	\item the obstruction theory for deformations of $\Scl$ as a supermanifold and;
	\item the deformation theory of $\Scl$. 
\end{enumerate}
Obstruction theory for (complex) supermanifolds is motivated by the effort to classify supermanifolds into two classes: \emph{split} and \emph{non-split}. In the early works of Berezin, collected in \cite{BER}; Green in \cite{GREEN} and Manin in \cite{YMAN} a general framework is established to address this classification problem. However, it is nevertheless quite difficult to conclude when a given supermanifold will be split. 
\\\\
In this article we will deduce necessary and sufficient conditions for an odd, second order deformation $\Xc$ to be split. That is, we prove (Theorem \ref{tgguygvg3fg83h3}):

\begin{THMS}
An odd, second order deformation $\Xc$ of a super Riemann surface of genus $g>1$ is split if and only if its primary obstruction class vanishes.
\end{THMS}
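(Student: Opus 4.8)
The plan is to read off both implications from the filtration of the structure sheaf $\Oc_\Xc$ by powers of its nilpotent ideal $\Jc$, working within Green's obstruction-theoretic framework. Write $\Sigma=\Scl_\red$ for the underlying Riemann surface and $\Dc\subset\Tc_\Scl$ for the superconformal distribution, so that $\Dc^{\otimes2}\cong\Tc_\Sigma$; the deviation of $\Xc$ from its split model is then recorded by a sequence of classes $\omega_2,\omega_4,\dots$ valued in the graded cohomology groups $\Hrm^1(\Sigma,\Dc^{\otimes-2k})$. Because $\Xc$ is of \emph{second} order, the filtration by even powers of $\Jc$ has length two, $\Oc_\Xc\supset\Jc^2\supset\Jc^4\neq 0=\Jc^6$, so the only obstruction classes that can occur are the primary class $\omega_2$ and a single secondary class $\omega_4$. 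The forward implication I would dispose of first: the primary class is an invariant of the isomorphism type of $\Xc$ and vanishes identically on the split model, whence $\Xc$ split forces $\omega_2=0$.

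For the converse I would assume $\omega_2=0$ and show that $\Xc$ coincides with its split model. Representing $\omega_2$ by a \v{C}ech $1$-cocycle for the superconformal atlas, its vanishing means this cocycle is a coboundary, so I can correct the transition data by a filtration-preserving automorphism and arrange that $\Xc$ agrees with the split model modulo $\Jc^4$. This realignment promotes the obstruction to the secondary class $\omega_4\in\Hrm^1(\Sigma,\Dc^{\otimes-4})$, and a short check---using precisely the hypothesis $\omega_2=0$---shows that $\omega_4$ is independent of the chosen coboundary and is therefore a genuine invariant of the stage-two data. The theorem thereby reduces to a single vanishing statement: that $\omega_4=0$ whenever $g>1$.

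Here the genus hypothesis is decisive and enters through Serre duality. Since $K_\Sigma\cong\Dc^{\otimes-2}$, one has $K_\Sigma\otimes\Dc^{\otimes4}\cong\Dc^{\otimes2}\cong\Tc_\Sigma$, so that $\Hrm^1(\Sigma,\Dc^{\otimes-4})\cong\Hrm^0(\Sigma,\Tc_\Sigma)^\ast$, the dual of the space of holomorphic vector fields on $\Sigma$. For $g>1$ this space is zero, hence $\Hrm^1(\Sigma,\Dc^{\otimes-4})=0$ and $\omega_4=0$ of necessity; a final level-four correction of the atlas then splits $\Xc$ completely, there being no further term in the filtration. I expect the main obstacle to lie in the reduction step, namely in pinning down the secondary class exactly: one must track how the genuinely second-order part of the deformation data feeds into the level-four cocycle, and verify that after killing $\omega_2$ the residual obstruction really does land in $\Hrm^1(\Sigma,\Dc^{\otimes-4})$ rather than in a larger group mixing the superconformal direction with the odd deformation parameters. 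The non-abelian bookkeeping of the atlas corrections, together with the well-definedness of $\omega_4$ modulo the choices made in trivialising $\omega_2$, are the delicate points; once these are secured the cohomological vanishing above closes the argument.
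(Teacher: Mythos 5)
Your forward direction is fine, and your overall strategy---trivialise the primary class, then show the residual obstruction vanishes for cohomological reasons---is the natural first attempt; but your identification of the residual obstruction is wrong, and this is precisely the point the paper flags as the main difficulty. First, a bookkeeping error: $\Xc$ has dimension $(1|3)$, so the nilpotent ideal satisfies $\Jc^3\neq 0=\Jc^4$; there is no stage with $\Jc^4\neq0=\Jc^6$, and in particular no even, level-four class $\omega_4\in H^1(\Sigma,\Dc^{\otimes-4})$. After killing the primary class one agrees with the split model modulo $\Jc^3$, and the genuine secondary obstruction is the \emph{odd-type} class at level three, valued in $H^1(C,\wedge^3\Ec\otimes\Ec^\vee)$, where $\Ec$ is the modelling bundle of $\Xc$. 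Second, $\Ec$ is not a tensor power of the superconformal line bundle: the odd deformation parameters $\xi_1,\xi_2$ contribute trivial directions, so $\Ec$ sits in the extension $0\to\Cc_C^{\oplus2}\to\Ec\to\Om^{1/2}_C\to0$. Consequently $\wedge^3\Ec\cong\Om^{1/2}_C$ and $\Ec^\vee$ has graded pieces $\Tc^{1/2}_C$ and $\Cc_C^{\oplus2}$, so $H^1(C,\wedge^3\Ec\otimes\Ec^\vee)$ contains $H^1(C,\Cc_C)\cong\Cbb^g$, which is nonzero for every $g>0$. Your Serre-duality computation $H^1(\Sigma,\Dc^{\otimes-4})\cong H^0(\Sigma,\Tc_\Sigma)^\ast=0$ is internally correct but computes the wrong group: no purely cohomological vanishing can close the argument, because the group in which the secondary obstruction actually lives does not vanish.

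What saves the theorem is the superconformal structure of the atlas, which your proposal never uses beyond fixing the sheaves. In the paper, superconformality forces the relation $\pt g^{12}_{UV}/\pt x = 2\zeta_{UV}\zeta^{12}_{UV}$ (Corollary \ref{ueievbhjgbvbeiuno}): the level-three datum $\zeta^{12}$ is determined by the level-two datum $g^{12}$, so once the primary class vanishes the last splitting condition \eqref{vuyrgvuyrguyvheiu} can be solved explicitly, with $\phi^{12}_U=\tfrac12\,\pt\lam^{12}_U/\pt x$, and no appeal to a vanishing secondary cohomology group is needed. The genus hypothesis also enters elsewhere than you propose: once through the nonconstancy of the spin transition functions $\zeta_{UV}$ for $g>1$, whose powers are algebraically independent and force $[\psi^1,\psi^2]=0$, making $g^{12}$ an honest $\Tc_C$-valued cocycle (Proposition \ref{5g4hg74jf989f3}); and once through $H^0(C,\Tc^{1/2}_C)=0$, which makes $\iota_*\colon H^1(C,\Tc_C)\to H^1(C,\wedge^2\Ec\otimes\Tc_C)$ injective, so that $\om_\Xc=0$ indeed kills the class $g(\pi_\Xc)$ (Proposition \ref{fnuuieuf3h9u}). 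Without importing these superconformal constraints into the level-three analysis, your step ``$\omega_2=0$ implies the secondary class vanishes'' fails, and this failure is not repairable within the purely obstruction-theoretic framework you set up---indeed Lemma \ref{f64gf684f3h9} shows nearby statements are delicate even with superconformality in hand.
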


To emphasize: supermanifolds of dimension $(p|2)$ are classified by their primary obstruction class. This is generally false in dimension $(p|q)$ with $q>2$ however, and odd deformations $\Xc$ of second order have dimension $(1|3)$. Hence the above result is perhaps a little surprising. As it results directly from the superconformal structure on $\Xc$ it is natural to question whether the above theorem will hold in more generality (Question \ref{fh39hf9839323}). 
\\\\
Given that $\Mfr_g$ is generally non-split, a greater understanding of split and non-split structures in complex supergeometry is desirable.

\subsection{Outline}
This article is divided into three sections (excluding this introduction and concluding remarks). In what follows we briefly summarise of each section. 

Section \ref{prelim} consists of preliminary theory concerning supermanifolds and super Riemann surfaces. In Section \ref{nfu3h9j39j39dj3kd02} we consider odd deformations of super Riemann surfaces. A study of them and their Kodaira-Spencer map is presented with a focus on odd deformations of second order. We conclude with a construction of a non-split deformation with vanishing Kodaira-Spencer map. In Section \ref{r83jf893jf0j30} we look at odd, second order deformations as complex supermanifolds. Our main result on splitting these deformations is Theorem \ref{tgguygvg3fg83h3} whose proof occupies much of this section.

\subsection*{Acknowledgements}
The author would like to firstly acknowledge the Australian National University where much of this work was undertaken; the anonymous referee for useful suggests on the improvement of this article; and finally the Yau Mathematical Sciences Centre, Tsinghua University, where support was given while preparing revisions of this article.

\section{Preliminaries}
\label{prelim}

\subsection{Complex Supermanifolds}
Many the concepts from geometry, both algebraic and differential, generalise to the setting of supergeometry. Standard references include \cite{LEI, BER,YMAN, QFAS}. We will briefly summarise here some relevant notions for our purposes in this article. We begin with that of a supermanifold: 
\\\\
Fix a rank $q$, holomorphic vector bundle $E$ over a $p$-dimensional, complex manifold $M$. Denote by $\Ec$ the sheaf of holomorphic sections of $E$. With this data we define a complex supermanifold as follows:

\begin{DEF}
\emph{A $(p|q)$-dimensional, complex supermanifold modelled on $(M, E)$ is a locally ringed space $\Xfr_{(M,E)} = (M, \Oc_M)$ where $\Oc_M$ is globally $\Zbb_2$-graded and locally isomorphic to the sheaf of algebras $\wedge^\bt\Ec$. The complex manifold $M$ is called the \emph{reduced space} of $\Xfr$ and the bundle $E$ is called the \emph{modelling bundle} of $\Xfr$}
\end{DEF}

The space $\Pi E := (M, \wedge^\bt\Ec)$ will be a $(p|q)$-dimensional supermanifold, albeit a particularly trivial one. It is referred to as the \emph{split model}.  Any supermanifold $\Xfr_{(M,E)}$ modelled on a pair $(M, E)$ will be locally isomorphic to $\Pi E$. This fact serves as a basis on which to start a program of classification.

\begin{DEF}\label{4j9fj409gjo3jg30}
\emph{A supermanifold $\Xfr_{(M, E)}$ is said to be \emph{split} if it is isomorphic to $\Pi E$. Otherwise, it is said to be \emph{non-split}. An isomorphism $\Xfr_{(M, E)}\stackrel{\cong}{\ra}\Pi E$, if one exists, is referred to as a \emph{splitting map}.
}
\end{DEF}

\subsection{An Atlas for Supermanifolds}
Just like manifolds, supermanifolds admit an atlas consisting of a collection of open sets $\Ufr = \{\Uc, \Vc, \ldots\}$, charts $\Uc\stackrel{\cong}{\ra}\Cbb^{p|q}$ and transition data. If $(x^\mu, \q_a)$ (resp. $(y^\mu, \eta_a)$) denote coordinates on $\Uc$ (resp. $\Vc$), then on the intersection $\Uc\cap\Vc$ we can write:
\begin{align}
y^\mu &= \rho_{\Uc\Vc}^\mu(x, \q) = f_{UV}^\mu(x) + \sum_{|I|>0} f_{UV}^{\mu|2I}\q_{2I}
\label{uicuibviuv78v3}
\\
\eta_a &= \rho_{\Uc\Vc, a}(x, \q) = \zeta_{UV, a}^b(x)~\q_b + \sum_{|I|>0} \zeta_{UV, a}^{2I+1}(x)~\q_{2I+1}
\label{porviorhg894h89}
\end{align}
where $I$ is a multi-index and $|I|$ its length; for $I = (i_1, \ldots, i_n)$ that $\q_I = \q_{i_1}\wedge\cdots\wedge\q_{i_n}$; by $2I$ (resp. $2I+1$) it is meant the multi-indices of even, resp. odd, length. If the coefficient functions $\{(f_{UV}^{\mu|2I})\}$, resp. $\{ (\zeta_{UV, a}^{2I+1})\}$, are holomorphic on the intersections $\Uc\cap \Vc$ then we say $\{\rho_{\Uc\Vc}\}$ is holomorphic. The collection $\vartheta = \{\rho_{\Uc\Vc}\}$ are the transition data of a complex supermanifold $\Xfr$. In this article we will term the pair $(\Ufr, \vartheta)$ at \emph{atlas} for $\Xfr$.

\subsection{Splittings and Obstructions}
A method to study the problem of whether a given supermanifold is split was devised by Green in \cite{GREEN} and elaborated on further by Manin in \cite{YMAN} and Onishchik in \cite{ONISHCLASS}. Rather than trying to construct a splitting map, one considers studying instead the obstructions to the existence of such a map. This leads then to a notion of `obstruction theory' for supermanifolds. 
\\\\
In this article we will only be considered with what we term the \emph{primary obstruction class}.\footnote{This class is referred to also as the `first' obstruction class to splitting in works such as \cite{DW1, DW2}.} For a supermanifold $\Xfr$ we can write down its primary obstruction class directly from any atlas. More concretely, we have the following result, a justification for which we leave to \cite[p. 191]{YMAN} and \cite[Ch. 2, 3]{BETTPHD}.

\begin{LEM}\label{pekpfk49fk94p}
The primary obstruction class of $\Xfr_{(M,E)}$, denoted $\om_{(M, E)}$, is a class in $H^1(M, \wedge^2\Ec\otimes\Tc_M)$. With respect to an atlas $(\Ufr, \vartheta)$ for $\Xfr_{(M, E)}$, a cocycle representative $\{\om_{\Uc\Vc}\}$ for $\om_{(M, E)}$ is:
\begin{align}
\om_{\Uc\Vc} :=\sum_{\mu, i , j} f_{UV}^{\mu|ij}~\q_{ij}\frac{\pt}{\pt y^\mu}. 
\label{TVYTRVUYEBIY}
\end{align}
If $\om_{(M, E)}\neq0$, then $\Xfr_{(M, E)}$ is non-split. \qed
\end{LEM}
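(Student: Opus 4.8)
The plan is to realise the splitting problem as a problem in non-abelian \v{C}ech cohomology and to extract $\om_{(M,E)}$ as the leading-order obstruction, following the scheme of Green and Manin. Write $\Ac = \wedge^\bt\Ec$ for the structure sheaf of the split model $\Pi E$, graded by exterior degree, and let $\Jc\subset\Oc_M$ be the ideal generated by the odd sections. The $\Jc$-adic filtration $\Oc_M\supset\Jc\supset\Jc^2\supset\cdots$ has associated graded sheaf $\Ac$, and a splitting map in the sense of Definition \ref{4j9fj409gjo3jg30} is precisely a filtered isomorphism $\Oc_M\xrightarrow{\cong}\Ac$ inducing the identity on the associated graded. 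First I would recall that such maps are governed by the sheaf of groups $\Gc$ of automorphisms of $\Ac$ that fix $M$ and act as the identity on the associated graded; this sheaf is filtered, $\Gc = \Gc_{2}\supset\Gc_{4}\supset\cdots$, by the order to which an automorphism deviates from the identity, and $\Xfr_{(M,E)}$ determines a class in the (non-abelian) pointed set $H^1(M,\Gc_2)$ whose triviality is equivalent to splitness. Because automorphisms are even, they alter exterior degree only by even amounts, so the first graded quotient is $\Gc_2/\Gc_4\cong\Der_2\Ac$, the sheaf of degree-$2$ derivations of $\Ac$, and in coordinates $\Der_2\Ac\cong(\wedge^2\Ec\otimes\Tc_M)\oplus(\wedge^3\Ec\otimes\Ec^\vee)$.

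Next I would extract the cocycle. Given an atlas $(\Ufr,\vartheta)$, comparing the transition data \eqref{uicuibviuv78v3}--\eqref{porviorhg894h89} of $\Xfr$ with the split transition data ($y^\mu=f_{UV}^\mu(x)$, $\eta_a=\zeta_{UV,a}^b(x)\q_b$) yields on each overlap an automorphism $\psi_{\Uc\Vc}\in\Gc_2(\Uc\cap\Vc)$ of the split model measuring the deviation. Its image in $\Gc_2/\Gc_4\cong\Der_2\Ac$ is the degree-$2$ part of this deviation: the even coordinate contributes the term $\sum_{\mu,i,j}f_{UV}^{\mu|ij}\q_{ij}\,\pt/\pt y^\mu$ valued in $\wedge^2\Ec\otimes\Tc_M$, while the degree-three part $\zeta_{UV,a}^{2I+1}$ of the odd coordinate contributes the complementary term valued in $\wedge^3\Ec\otimes\Ec^\vee$. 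Since the displayed decomposition of $\Der_2\Ac$ is a direct sum of $\Oc_M$-coherent sheaves preserved by the degree-zero change-of-frame action, I may project onto the first summand; this projection of the cocycle is \eqref{TVYTRVUYEBIY}, and declaring $\om_{(M,E)}$ to be its class places $\om_{(M,E)}\in H^1(M,\wedge^2\Ec\otimes\Tc_M)$ as asserted.

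It then remains to check that \eqref{TVYTRVUYEBIY} is a genuine $1$-cocycle and that its class is independent of the atlas. For the cocycle condition I would substitute the transition data into the compatibility relation $\rho_{\Uc\Wc}=\rho_{\Vc\Wc}\circ\rho_{\Uc\Vc}$ on a triple overlap $\Uc\cap\Vc\cap\Wc$ and expand to exterior degree $2$; the degree-$0$ and degree-$1$ parts recover the cocycle relations for the reduced transition functions $f_{UV}^\mu$ and for the bundle $E$, and feeding these back into the degree-$2$ part collapses it to the additive relation $\om_{\Uc\Wc}=\om_{\Vc\Wc}+\om_{\Uc\Vc}$ in the appropriate $E$-twisted frame, i.e.\ $\delta\{\om_{\Uc\Vc}\}=0$. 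For atlas-independence I would note that a change of local coordinates of the form $x^\mu\mapsto x^\mu+g_\Uc^{\mu|ij}(x)\q_{ij}+\cdots$ alters $\{\om_{\Uc\Vc}\}$ exactly by the coboundary $\delta\{g_\Uc\}$, so the class in $H^1$ is well defined, and moreover $\om_{(M,E)}=0$ is precisely the condition that the degree-$2$ deviation be removable by such coordinate changes. Finally, splitness furnishes an atlas with split transition data, for which every $f_{UV}^{\mu|ij}$ vanishes and hence $\om_{(M,E)}=0$; by invariance this forces $\om_{(M,E)}=0$ for every atlas, so $\om_{(M,E)}\neq0$ implies $\Xfr_{(M,E)}$ is non-split. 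I expect the genuine labour to lie in the triple-overlap expansion verifying the cocycle condition, where one must track how the quadratic cross-terms redistribute between the two summands of $\Der_2\Ac$ and confirm that the $\wedge^2\Ec\otimes\Tc_M$-component closes up on its own.
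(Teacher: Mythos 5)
Your proposal is correct and follows essentially the same route as the paper, which does not prove the lemma in-text but defers precisely to this Green--Manin classification argument (the filtered sheaf of automorphisms of $\wedge^\bt\Ec$, its degree-two graded quotient $\Der_2\cong(\wedge^2\Ec\otimes\Tc_M)\oplus(\wedge^3\Ec\otimes\Ec^\vee)$, and projection of the resulting non-abelian cocycle), citing \cite[p.~191]{YMAN} and \cite[Ch.~2, 3]{BETTPHD}. One refinement to your justification: the direct-sum decomposition of $\Der_2$ is \emph{not} preserved by changes of chart (under $y=f(x)$, $\eta=\zeta(x)\q$ the field $\pt/\pt x^\mu$ acquires a $\q\,\pt/\pt\eta$ term), but the subsheaf $\wedge^3\Ec\otimes\Ec^\vee$ is invariant, so one should project to the quotient $\Der_2/(\wedge^3\Ec\otimes\Ec^\vee)\cong\wedge^2\Ec\otimes\Tc_M$ --- which is exactly why, in your anticipated triple-overlap expansion, the cross-terms fall entirely into the $\wedge^3\Ec\otimes\Ec^\vee$ component and the $\wedge^2\Ec\otimes\Tc_M$ component of \eqref{TVYTRVUYEBIY} closes up on its own.
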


At this stage a useful observation is the following:

\begin{LEM}\label{ejf9hf98h839f3830}
If $\dim \Xfr_{(M, E)} = (p|1)$, then $\Xfr_{(M, E)}$ is split. If $\dim \Xfr_{(M, E)} = (p|2)$, then $\Xfr$ is split if and only if $\om_{(M, E)}=0$.\qed 
\end{LEM}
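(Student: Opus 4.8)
The plan is to treat the two dimension regimes separately, exploiting in each the vanishing of the high exterior powers of $\Ec$. For the $(p|1)$ case, when $q=1$ the modelling bundle $E$ is a line bundle, so $\wedge^k\Ec=0$ for all $k\geq 2$ and $\wedge^\bt\Ec=\Oc_M\oplus\Ec$. Inspecting the transition data \eqref{uicuibviuv78v3}--\eqref{porviorhg894h89}, every nilpotent correction is indexed by a multi-index $I$ with $|I|>0$; but with a single odd coordinate $\q$ the products $\q_{2I}$ (even length $>0$) and $\q_{2I+1}$ (odd length $\geq 3$) all vanish, leaving $y^\mu=f^\mu_{UV}(x)$ and $\eta=\zeta_{UV}(x)\,\q$ on every overlap. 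These are precisely the transition data of $\Pi E$, so the given atlas is already a split atlas and $\Xfr\cong\Pi E$.

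For the $(p|2)$ case the forward implication is the contrapositive of the final sentence of Lemma \ref{pekpfk49fk94p}: a split $\Xfr$ has $\om_{(M,E)}=0$. The substance is the converse. When $q=2$ the bundle $E$ has rank two, so $\wedge^2\Ec$ is a line bundle and $\wedge^k\Ec=0$ for $k\geq 3$; equivalently the nilpotent ideal $\Jc$ satisfies $\Jc^3=0$ with $\Jc^2\cong\wedge^2\Ec$. Feeding this into \eqref{uicuibviuv78v3}--\eqref{porviorhg894h89}, the only surviving nilpotent term is the even correction $f^{\mu|12}_{UV}(x)\,\q_{12}$ to $y^\mu$, while the odd coordinates transform strictly linearly, $\eta_a=\zeta^b_{UV,a}(x)\,\q_b$. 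Thus the entire departure of the atlas from split form is recorded by the single cocycle $\om_{\Uc\Vc}=\sum_\mu f^{\mu|12}_{UV}\,\q_{12}\,\pt/\pt y^\mu$ of \eqref{TVYTRVUYEBIY}.

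Now assume $\om_{(M,E)}=0$, so that $\{\om_{\Uc\Vc}\}$ is a coboundary: there are sections $\tau_\Uc=\sum_\mu a^\mu_\Uc(x)\,\q_{12}\,\pt/\pt x^\mu\in(\wedge^2\Ec\otimes\Tc_M)(\Uc)$ with $\om_{\Uc\Vc}=\tau_\Vc-\tau_\Uc$ on overlaps. I would use these to define chart-wise maps $\psi_\Uc\colon(x^\mu,\q_a)\mapsto(x^\mu+a^\mu_\Uc(x)\,\q_{12},\,\q_a)$ and then verify that the conjugated transition maps $\psi_\Vc\circ\rho_{\Uc\Vc}\circ\psi_\Uc^{-1}$ carry vanishing $\q_{12}$-coefficient, hence are split. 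The computation closes precisely because $\Jc^3=0$: no odd-coordinate adjustment is possible or needed (any such correction would land in $\Jc^3$), composing these order-two changes cannot regenerate nilpotent terms, and $\psi_\Uc$ fixes $\q_{12}$ since it fixes each $\q_a$, so the $\q_{12}$-coefficients add and the coboundary relation cancels $f^{\mu|12}_{UV}$ exactly. This exhibits an isomorphism $\Xfr\cong\Pi E$, i.e. $\Xfr$ is split.

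The conceptual point underlying the converse — and the reason the result is special to $q\leq 2$ — is the collapse of the obstruction tower. The sheaf $\Gc$ of automorphisms of $\Pi E$ congruent to the identity modulo $\Jc^2$ is filtered with associated graded pieces $\gf_{2k}=(\Tc_M\otimes\wedge^{2k}\Ec)\oplus(\Ec^*\otimes\wedge^{2k+1}\Ec)$; for $q=2$ one has $\gf_{2k}=0$ for every $k\geq 2$ while $\gf_2=\Tc_M\otimes\wedge^2\Ec$, since $\wedge^{\geq 3}\Ec=0$. In fact the explicit description above shows $\Gc$ is itself \emph{abelian} and isomorphic to $\Tc_M\otimes\wedge^2\Ec$, so the nonabelian classifying set $H^1(M,\Gc)$ is a genuine cohomology group, canonically identified with $H^1(M,\Tc_M\otimes\wedge^2\Ec)$, under which the class of $\Xfr$ is $\om_{(M,E)}$. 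Splitness is triviality of this class, which is the asserted equivalence. I expect the only delicate step to be the bookkeeping in the conjugation of the previous paragraph; everything else is forced by the rank constraint on $E$.
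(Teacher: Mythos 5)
Your proof is correct, but it is worth noting that the paper itself offers no argument for this lemma: it is stated with a \qed{} and deferred to the classification results of Green, Manin and Onishchik cited around Lemma \ref{pekpfk49fk94p}. What you have written is a self-contained reconstruction of that standard proof, specialised to $q\leq 2$: the $(p|1)$ case is immediate from the atlas \eqref{uicuibviuv78v3}--\eqref{porviorhg894h89} exactly as you say, and for $q=2$ your identification of the automorphism sheaf $\Gc$ with the abelian sheaf $\Tc_M\otimes\wedge^2\Ec$ is precisely the collapse of Green's filtration that turns the nonabelian classifying set $H^1(M,\Gc)$ into an honest cohomology group in which the class of $\Xfr$ is $\om_{(M,E)}$. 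The one step you flagged as needing verification does close. Writing $\tilde\rho_{\Uc\Vc}=\psi_\Vc\circ\rho_{\Uc\Vc}\circ\psi_\Uc^{-1}$ with $\psi_\Uc(x,\q)=\bigl(x+a_\Uc(x)\,\q_{12},\,\q\bigr)$, the odd coordinates are untouched because every potential correction enters multiplied by $\q_{12}\q_b\in\Jc^3=0$, while the even coordinate of $\tilde\rho_{\Uc\Vc}$ acquires the $\q_{12}$-coefficient
\[
f^{\mu|12}_{UV}
-\frac{\pt f^\mu_{UV}}{\pt x^\nu}\,a^\nu_\Uc
+\det(\zeta_{UV})\,\bigl(a^\mu_\Vc\circ f_{UV}\bigr),
\]
the factor $\det(\zeta_{UV})$ arising from $\eta_{12}=\det(\zeta_{UV})\,\q_{12}$ under the linear odd transition. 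The Jacobian and $\det\zeta$ appearing here are exactly the frame factors by which a $0$-cochain valued in $\Tc_M\otimes\wedge^2\Ec$ transforms between charts, so the vanishing of this coefficient on every overlap is verbatim the coboundary condition on $\{\om_{\Uc\Vc}\}$ (your sign convention $\om_{\Uc\Vc}=\tau_\Vc-\tau_\Uc$ versus its negative is immaterial: replace $a$ by $-a$). Since $\Jc^3=0$ also guarantees that composing the maps $\psi_\Uc$ regenerates no nilpotent terms, the conjugated atlas is split, completing your argument; everything else in your proposal, including the forward implication via the contrapositive of Lemma \ref{pekpfk49fk94p}, is sound.
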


In general for a $(p|q)$-dimensional supermanifold $\Xfr_{(M, E)}$ with $q>2$, the vanishing of $\om_{(M, E)}$ implies the existence of higher obstruction classes, living in either $H^1(M, \wedge^{2k}\Ec\otimes \Tc_M)$ or $H^1(M, \wedge^{2k+1}\Ec\otimes \Ec^\vee)$, $k>2$. The supermanifold $\Xfr_{(M, E)}$ will be split if all of these higher obstruction classes to splitting vanish. For a more comprehensive account of (higher) obstruction theory for supermanifolds we refer to \cite{GREEN, ONISHCLASS, DW1, DW2}. A study utilising atlases to study higher obstruction theory is undertaken in \cite{OBSTHICK,BETTPHD}.

\subsection{Super Riemann Surfaces}
The following definition of a superconformal structure on a $(1|1)$-dimensional supermanifold is taken from \cite[p. 23]{DW1}, but can also be found in \cite{FRIEDAN} and in \cite{ROSLYGSCONF} where the term \emph{superconformal manifold} is used.

\begin{DEF}
\label{kclckjnrkcnksee}
\emph{Let $\Scl$ be a $(1|1)$-dimensional supermanifold and suppose $\Dfr\subset \Tc_\Scl$ is a subsheaf of rank $(0|1)$ such that the quotient bundle satisfies, 
\[
\Tc_\Scl/\Dfr \cong \Dfr^{\otimes 2}. 
\]
Then $\Dfr$ is called a \emph{superconformal structure} for $\Scl$. 
}
\end{DEF}

\begin{DEF}\label{rivorhovuh4o84o8j}
\emph{A \emph{super Riemann surface} is a $(1|1)$-dimensional supermanifold $\Scl$ with: (1) reduced space a Riemann surface; and (2) a choice of superconformal structure $\Dfr$. The genus $g$ of $\Scl$ is defined to be that of its reduced space. 
}
\end{DEF}

Not every $(1|1)$-dimensional supermanifold $\Scl$ will admit a superconformal structure. To illustrate we present the following characterisation which appears in \cite{RABCRANE}. 

\begin{THM}
\label{dcnjkfnvhjtbooejie}
Any super Riemann surface $\Scl$ is modelled on the data of a genus $g$ Riemann surface $C$ and a choice of spin structure $\ell$ on $C$, i.e., a line bundle $\ell$ on $C$ equipped with an isomorphism $\ell^{\otimes 2}\stackrel{\cong}{\ra}\Om^1_C$. \qed
\end{THM}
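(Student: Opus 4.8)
The plan is to identify the modelling bundle of $\Scl$ directly and then to show that the existence of a superconformal structure $\Dfr$ forces it to square to $\Om^1_C$. Since $\Scl$ is $(1|1)$-dimensional, Lemma \ref{ejf9hf98h839f3830} already tells us it is split: its structure sheaf is $\Oc_\Scl \cong \wedge^\bt\Ec = \Oc_C\oplus\Ec$, where $C$ is the reduced Riemann surface (of genus $g$ by Definition \ref{rivorhovuh4o84o8j}) and $\Ec$ is the sheaf of sections of a line bundle $L = E$, the modelling bundle. It therefore remains only to prove $L^{\otimes 2}\cong\Om^1_C$, i.e., that $\ell := L$ is a spin structure.

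First I would compute the tangent sheaf $\Tc_\Scl$ together with its reduction to $C$. In a local chart $(z,\q)$, with $z$ an even coordinate on $C$ and $\q$ an odd coordinate framing $\Ec = L$, the sheaf $\Tc_\Scl$ is freely generated over $\Oc_\Scl$ by $\pt_z$ (even) and $\pt_\q$ (odd). Passing to the reduced space one finds that $\pt_\q$ frames $L^\vee$ while $\pt_z$ frames $T_C$, and these identifications are natural, so they patch together globally.

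Next I would analyze the superconformal structure. Locally $\Dfr = \Oc_\Scl\langle D\rangle$ for a single odd vector field $D = \alpha(z)\,\pt_\q + \be(z)\,\q\,\pt_z$. The crucial point is that the hypothesis $\Tc_\Scl/\Dfr\cong\Dfr^{\otimes 2}$ is a non-degeneracy condition: for the quotient $\Tc_\Scl/\Dfr$ to be a line bundle at all, the leading coefficient $\alpha$ must be nowhere vanishing. After rescaling, $D = \pt_\q + h(z)\,\q\,\pt_z$, so that $D$ reduces on $C$ to a frame for $L^\vee$; hence $\Dfr_{\red}\cong L^\vee$, while a short computation in the quotient (using $\pt_\q \equiv -h\,\q\,\pt_z$ modulo $\Dfr$) shows $(\Tc_\Scl/\Dfr)_{\red}\cong T_C$.

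Finally I would restrict the defining isomorphism to the reduced space. Passing to the associated graded (equivalently, reducing modulo the nilpotent ideal) turns the $\Oc_\Scl$-module isomorphism $\Tc_\Scl/\Dfr\cong\Dfr^{\otimes 2}$ into an isomorphism of line bundles on $C$:
\[
T_C \;\cong\; (\Tc_\Scl/\Dfr)_{\red}\;\cong\;(\Dfr_{\red})^{\otimes 2}\;\cong\;(L^\vee)^{\otimes 2}\;=\;L^{-2}.
\]
Dualizing gives $L^{\otimes 2}\cong\Om^1_C$, so $L = \ell$ is a spin structure and $(C,\ell)$ is the asserted modelling data. I expect the main obstacle to be the chart-independent justification of the reduction step: one must verify that reducing both the subsheaf $\Dfr\subset\Tc_\Scl$ and the quotient $\Tc_\Scl/\Dfr$ to $C$ is well defined, that $\Dfr_{\red}\cong L^\vee$ holds intrinsically (this is precisely where the non-degeneracy of $\Dfr$ enters, forcing $\alpha$ to be a unit), and that the local isomorphisms glue so that $L^{\otimes 2}\cong\Om^1_C$ holds globally rather than merely chartwise.
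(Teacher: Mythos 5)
Your proposal is correct, and it cannot be compared against an internal argument because the paper states Theorem \ref{dcnjkfnvhjtbooejie} without proof, citing \cite{RABCRANE}. Your route is a legitimate, self-contained one: split $\Scl$ by Lemma \ref{ejf9hf98h839f3830}; note that on a split $(1|1)$ supermanifold every odd vector field has the form $\alpha(z)\,\pt_\q+\be(z)\,\q\,\pt_z$ (since $\q^2=0$); extract the unit condition on $\alpha$ from the requirement that $\Tc_\Scl/\Dfr$ be locally free of rank $(1|0)$ (it must be, being isomorphic to $\Dfr^{\otimes 2}$ --- if $\alpha$ vanished at a point the fiber of the quotient there would jump to dimension $(1|1)$); and reduce the defining isomorphism modulo nilpotents. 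The reduction step you flag is routine: $(-)\otimes_{\Oc_\Scl}\Oc_C$ is functorial, preserves isomorphisms and tensor products of locally free sheaves, and your local frames $\pt_\q$ and $\pt_z$ patch as frames of $L^\vee$ and $T_C$ exactly as you say, matching the paper's convention that odd coordinates frame $\Ec$. For comparison, the proof closest to the paper's own \v{C}ech methodology runs at the level of atlases: once the normal form $D=\pt_\q+h\,\q\,\pt_z$ gives superconformal coordinates, Lemma \ref{piorjviorjio5of04joijejj0jf039jpiorjviorjio5of04joijejj0jf039j} at $\xi=0$ (equivalently the first relation of \eqref{ojfoeofhoifjpie}) forces $\zeta_{UV}^2=\pt f_{UV}/\pt x$, i.e.\ the squared odd transition cocycle equals the Jacobian cocycle of $C$, which is the statement $\ell^{\otimes 2}\cong\Om^1_C$ at cocycle level. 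The atlas version is quicker and is the one the rest of the paper implicitly relies on; your sheaf-level version buys chart-independence, producing the isomorphism functorially from $\Dfr$ rather than from a chosen superconformal atlas --- and the two arguments converge precisely at the non-degeneracy/normal-form step.
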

 
For a super Riemann surface $\Scl$, since $\dim \Scl = (1|1)$, we have by Lemma \ref{ejf9hf98h839f3830}:

\begin{COR}
Any super Riemann surface $\Scl$ is split and can be written $\Scl = \Pi \Om^{1/2}_C$ for $\Om^{1/2}_C$ a choice of spin structure on $C$.\qed
\end{COR}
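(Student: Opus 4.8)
The plan is to obtain the statement as a direct synthesis of Theorem \ref{dcnjkfnvhjtbooejie} and Lemma \ref{ejf9hf98h839f3830}, with Definition \ref{4j9fj409gjo3jg30} supplying the bridge between the two. First I would invoke Theorem \ref{dcnjkfnvhjtbooejie} to pin down the modelling data of $\Scl$: it asserts that any super Riemann surface is modelled on a pair $(C, \ell)$, where $C$ is a genus $g$ Riemann surface and $\ell$ is a spin structure, i.e. a line bundle equipped with an isomorphism $\ell^{\otimes 2}\stackrel{\cong}{\ra}\Om^1_C$. In the notation of the excerpt this is precisely $\ell = \Om^{1/2}_C$, so the modelling bundle of $\Scl$ is the (rank one) spin bundle.

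Next I would appeal to Lemma \ref{ejf9hf98h839f3830}. Since $\dim \Scl = (1|1)$ by Definition \ref{rivorhovuh4o84o8j}, the odd dimension is $q = 1$, and the first clause of the lemma applies verbatim to give that $\Scl$ is split. Heuristically this is because the primary obstruction class of Lemma \ref{pekpfk49fk94p} lives in $H^1(C, \wedge^2\Ec \otimes \Tc_C)$, and $\wedge^2 \Ec = 0$ for a rank one $\Ec$; with a single odd coordinate the transition data \eqref{uicuibviuv78v3}--\eqref{porviorhg894h89} are forced into split form, leaving no room for higher obstructions either.

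Finally, I would unwind what ``split'' means. By Definition \ref{4j9fj409gjo3jg30}, a supermanifold modelled on $(M, E)$ is split exactly when it is isomorphic to its split model $\Pi E$. Combining this with the first step, in which $E$ is the spin bundle $\Om^{1/2}_C$, yields the claimed isomorphism $\Scl \cong \Pi \Om^{1/2}_C$.

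I do not expect any genuine obstacle here: the corollary is an immediate consequence of the two cited results. The only point requiring a moment's care is the identification in the first step --- namely confirming that the modelling bundle produced by Theorem \ref{dcnjkfnvhjtbooejie} is precisely the spin line bundle written $\Om^{1/2}_C$ --- together with checking that the abstract isomorphism type $\Pi E$ furnished by the splitting is recorded by that same bundle, so that the splitting does not merely produce some split model but the one built from $\Om^{1/2}_C$.
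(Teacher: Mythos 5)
Your proposal is correct and matches the paper's own (implicit) argument exactly: the paper derives this corollary by combining Theorem \ref{dcnjkfnvhjtbooejie}, which identifies the modelling data as a Riemann surface $C$ with spin structure $\Om^{1/2}_C$, with the first clause of Lemma \ref{ejf9hf98h839f3830}, which states that any supermanifold of odd dimension $q=1$ is split. Your extra remark that $\wedge^2\Ec = 0$ for rank one $\Ec$ is a sound heuristic for why the lemma holds, though the paper treats the lemma as given.
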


\section{Deformations of Super Riemann Surfaces} 
\label{nfu3h9j39j39dj3kd02}

\subsection{Preliminaries} 
Deformation theory for complex superspaces more generally was established by Vaintrob in \cite{VAIN}.\footnote{However, as pointed out by Donagi and Witten in \cite[pp. 29-30]{DW2}, it is a more subtle issue to construct deformations of non-split superspaces.} There, deformation theory is developed from a  more algebraic viewpoint. Presently, we adopt a more \v Cech-theoretic construction of deformations, motivated by the original work on deformations of compact, complex manifolds by Kodaira and Spencer, as detailed in \cite{KS}. 
\\\\
Let $\vp_\xi: \Cbb^{1|1} \ra\Cbb^{1|1}$ be a morphism with $\xi\in \Cbb^{0|n}$. Write $\vp_\xi = (\vp_\xi^+, \vp_\xi^-)$, where:
\begin{align*}
\vp_\xi^+ &= \lam + \lam^{ij}~\xi_{ij} + \lam^{ijkl}~\xi_{ijkl} + \ldots + f^i~\xi_i\q + f^{ijk}\xi_{ijkl}\q + \ldots
\\
\vp^-_\xi 
&= \zeta~\q + \zeta^{ij}~\xi_{ij}\q + \ldots + \psi^i\xi_i + \psi^{ijk}~\xi_{ijk} + \ldots
\end{align*}
The coefficients $\lam^I, f^I, \zeta^I, \psi^I$ are all functions of $x$. In setting $\xi = (\xi_1, \ldots, \xi_n)$, we have more succinct notation: 
\begin{align}
\vp_\xi^+ = \lam(x, \xi) + f(x, \xi)\q
&&
\mbox{and}
&&
\vp^-_\xi = \zeta(x, \xi)\q + \psi(x, \xi).
\label{387hfh4fhh854jfiorf8}
\end{align}
The following result is well-known and can be found, for instance, in \cite{RABCRANE}.

\begin{LEM}\label{piorjviorjio5of04joijejj0jf039jpiorjviorjio5of04joijejj0jf039j}
The morphism $\vp_\xi$ is superconformal if and only if
\begin{align*}
\zeta^2 = \frac{\pt \lam}{\pt x}  + \psi\frac{\pt\psi}{\pt x}
&&
\mbox{and}
&&
f = \zeta\cdot \psi.
\end{align*}\qed
\end{LEM}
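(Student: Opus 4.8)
The plan is to translate the superconformal condition into a single algebraic identity on $\Cbb^{1|1}$ and then read off the two equations by comparing coefficients of $\q$. Write $(x,\q)$ for the source coordinates and $(y,\eta)$ for the target coordinates of $\vp_\xi$. The standard superconformal structure $\Dfr$ of Definition \ref{kclckjnrkcnksee} is generated by the odd vector field $D = \pt_\q + \q\,\pt_x$ on the source and by $\widehat D = \pt_\eta + \eta\,\pt_y$ on the target (note $D^2 = \pt_x$, so that $\Tc_\Scl/\Dfr \cong \Dfr^{\otimes 2}$ as required). By definition $\vp_\xi$ is superconformal exactly when its differential preserves $\Dfr$, i.e. when $D$ is carried to an $\Oc$-multiple of $\widehat D$.

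First I would make this explicit by re-expressing $D$ in the target coordinates. Since $y = \vp_\xi^+$ and $\eta = \vp_\xi^-$, the chain rule gives
\begin{align*}
D = \big(D\vp_\xi^+\big)\,\pt_y + \big(D\vp_\xi^-\big)\,\pt_\eta .
\end{align*}
Comparing with $\widehat D = \pt_\eta + \eta\,\pt_y$, this is a multiple of $\widehat D$ if and only if its $\pt_y$-coefficient equals $\eta$ times its $\pt_\eta$-coefficient, that is
\begin{align*}
D\vp_\xi^+ = \big(D\vp_\xi^-\big)\,\vp_\xi^- ,
\end{align*}
the factor $D\vp_\xi^-$ being the proportionality function. (For $\vp_\xi$ to be an honest superconformal change of coordinates this factor must be a unit, which is automatic here as $\zeta$ is invertible.) Thus the entire content of the lemma is this one identity in $\Oc_{\Cbb^{1|1}}$.

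Next I would substitute $\vp_\xi^+ = \lam + f\q$ and $\vp_\xi^- = \zeta\q + \psi$ from \eqref{387hfh4fhh854jfiorf8}, recalling that $\lam,\zeta$ are even and $f,\psi$ are odd. A direct computation with $D = \pt_\q + \q\,\pt_x$ and $\q^2 = 0$ yields, in our sign conventions,
\begin{align*}
D\vp_\xi^+ = f + \q\,\tfrac{\pt\lam}{\pt x}
&&\text{and}&&
D\vp_\xi^- = \zeta + \q\,\tfrac{\pt\psi}{\pt x}.
\end{align*}
Expanding $\big(D\vp_\xi^-\big)\vp_\xi^- = \big(\zeta + \q\,\tfrac{\pt\psi}{\pt x}\big)\big(\zeta\q + \psi\big)$, dropping the $\q^2$-terms and moving the odd factors past one another, the $\q^0$- and $\q^1$-parts come out as $\zeta\psi$ and $\big(\zeta^2 - \psi\,\tfrac{\pt\psi}{\pt x}\big)\q$ respectively. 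Matching these against the corresponding parts of $D\vp_\xi^+$ gives precisely
\begin{align*}
f = \zeta\psi
&&\text{and}&&
\zeta^2 = \tfrac{\pt\lam}{\pt x} + \psi\,\tfrac{\pt\psi}{\pt x},
\end{align*}
and since each step above is an equivalence, conversely these two identities reassemble into $D\vp_\xi^+ = \big(D\vp_\xi^-\big)\vp_\xi^-$. This establishes the ``if and only if.''

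The step demanding the most care is the sign bookkeeping in this last computation. Both $\q$ and the deformation parameters $\xi_i$ are odd, so the coefficient functions $f$ and $\psi$ are odd as well; consequently applying $D$ to a product such as $f\q$, and reordering terms like $\psi\,\tfrac{\pt\psi}{\pt x}\,\q$, must be carried out with the Koszul sign rule dictated by the chosen conventions. It is exactly this bookkeeping that fixes the precise signs in the two displayed equations; everything else is routine substitution and collection of terms by degree in $\q$.
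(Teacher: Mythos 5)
Your proof is correct: the paper itself gives no argument for this lemma (it is stated as well-known with a citation to Rabin--Crane), and your computation --- pushing $D = \pt_\q + \q\,\pt_x$ forward and reducing superconformality to $D\vp_\xi^+ = \big(D\vp_\xi^-\big)\vp_\xi^-$, then matching $\q$-degrees --- is exactly the standard derivation behind that citation. The only point to watch is the one you already flag: the sign of $\pt_\q(f\q)$ depends on the left- versus right-derivative convention, and your stated convention is the one consistent with the lemma's signs $f = \zeta\psi$ and $\zeta^2 = \pt\lam/\pt x + \psi\,\pt\psi/\pt x$.
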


\begin{DEF}
\emph{A family of superconformal morphisms $\vp= \{\vp_\xi\}_{\xi\in \Cbb^{0|n}}$ is said to be a superconformal morphism of \emph{order} $n$}.
\end{DEF}

\begin{DEF}
\emph{Fix a super Riemann surface $\Scl = \Pi \Om^{1/2}_C$. An atlas $(\Ufr, \vartheta)$ for a supermanifold $\Xc$ with $\Xc_\red = C$ is said to be \emph{superconformal} and of order $n$ with respect to $\Scl$ if:
\begin{enumerate}[(i)]
	\item the data $\vartheta$ comprises a collection of families of superconformal morphisms $\{\rho_{\Uc\Vc;\xi}\}_{\Uc,\Vc\in \Ufr}$ of order $n$ and;
	\item	at $\xi = 0$ we have: $\{\rho_{\Uc\Vc;\xi}|_{\xi = 0}\} = \{\rho_{\Uc\Vc}\}$ where $\{\rho_{\Uc\Vc}\}$ are the transition data for the given super Riemann surface $\Scl$.
\end{enumerate}
}
\end{DEF}

\begin{DEF}\label{4hf984hf98f830}
\emph{An odd, $n$-th order deformation of a super Riemann surface $\Scl$ is a supermanifold $\Xc$ which admits a superconformal atlas of order $n$ with respect to $\Scl$.}
\end{DEF}

\begin{REM}
\emph{Odd deformations of order $n = 1$ are referred to as \emph{infinitesimal}. These deformations have been well studied. For $\Scl = \Pi \Om^{1/2}_C$ fixed, the infinitesimal deformations are classified by $H^1(C, \Tc_C^{1/2})$. References for this include \cite{RABCRANE, DW1}. For a detailed treatment we refer also to \cite[Chapter 6]{BETTPHD}. 
}
\end{REM}

\begin{REM}
\emph{In the physics literature, a \emph{deformation} of a super Riemann surface as we have given in Definition \ref{4hf984hf98f830}, is itself referred to as a super Riemann surface. See for instance the definition in \cite[p. 603]{RABCRANE}.}
\end{REM}

For $\Xc$ an odd, $n$-th order deformation of a super Riemann surface $\Scl$ there will exist a morphism of supermanifolds $\pi_\Xc : \Xc\ra \Cbb^{0|n}$. This morphism is given locally by a projection $(x, \q, \xi) \mapsto \xi$ and can be identified with $\Scl$ at $0$, i.e, $\pi_\Xc^{-1}(0) = \Scl\subset \Xc$. Evidently $\Xc$ will be a complex supermanifold of dimension $(1| n+1)$.

\subsection{The Superconformal Atlas $n = 2$}
The goal of the present section is to set-up the preliminary theory with which to study odd deformations of second order in more detail. We begin by taking a closer look at Lemma \ref{piorjviorjio5of04joijejj0jf039jpiorjviorjio5of04joijejj0jf039j}.

\begin{ILL}\label{ijodjriohtsayaubdiebiuh3hjoi}
Suppose $n = 2$ and let $\vp_\xi: \Cbb^{1|1}\ra\Cbb^{1|1}$ be a superconformal morphism with $\xi = (\xi_1,\xi_2)\in \Cbb^{0|2}$. Write $\vp$ as in \eqref{387hfh4fhh854jfiorf8} where,
\begin{align*}
\lam(x, \xi) = \lam^0(x) + \lam^{12}(x)~\xi_{12}
&&
f(x, \xi) = f^1(x) ~\xi_1+ f^{2}(x)~\xi_{2}
\end{align*}
and similarly,
\begin{align*}
\zeta(x, \xi) = \zeta^0(x) + \zeta^{12}(x)~\xi_{12}
&&
\mbox{and}
&&
\psi(x, \xi) = \psi^1(x)~\xi_1 + \psi^2(x)~\xi_2.
\end{align*}
Then from Lemma $\ref{piorjviorjio5of04joijejj0jf039jpiorjviorjio5of04joijejj0jf039j}$, since $\vp_\xi$ is superconformal, we have the relations:
\begin{align}
(\zeta^0)^2 = \frac{\pt\lam^0}{\pt x};
&&
f^i = \zeta^0\psi^i.
\label{opemccmlenrvemlvm}
\end{align}
and 
\begin{align}
\frac{\pt}{\pt x}\lam^{12} = 2\zeta^0\zeta^{12} - \frac{\pt\psi^1}{\pt x}\psi^2 + \psi^1\frac{\pt \psi^2}{\pt x}
\label{opejoivh839v30455j}
\end{align}
Hence any solution to \eqref{opemccmlenrvemlvm} and \eqref{opejoivh839v30455j} will define a superconformal morphism of $\Cbb^{1|1}$ over $\Cbb^{0|2}$. 
\end{ILL}

Now consider a superconformal atlas $(\Ufr, \vartheta)$, where $\vartheta = \{\rho_\xi\}$ is given by \eqref{hbbubeubcdioiorjf4jf984jf904j4fj389f839jf9} and \eqref{dioiorjf4jf984jf904j4fj389f839jf9}. For convenience we recall this below. Firstly:
\begin{align}
 \rho_{\Uc\Vc; \xi}^+ &= f_{UV} + f^i_{UV}~\xi_i\q + g^{12}_{UV}~\xi_{12} ~~
 \mbox{and;}
 \label{y873d73y8dj2d}
 \\
 \rho_{\Uc\Vc,\xi}^- &= \zeta_{UV}~\q +
  \psi^i_{UV}~\xi_i +  \zeta_{UV}^{12}~\xi_{12}\q.
\label{ggfgfGSVVDVDJHB}
 \end{align}
In order for $\rho_\xi = \{\rho_{\Uc\Vc;\xi}\}$ to be superconformal we require the conditions in \eqref{opemccmlenrvemlvm} and \eqref{opejoivh839v30455j} to hold. That is, on the intersections $U\cap V$:
\begin{align}
\frac{\pt f_{UV}}{\pt x} = \zeta_{UV}^2;
&&
f_{UV}^i = \zeta_{UV}\psi_{UV}^i;
\label{ojfoeofhoifjpie}
\end{align}
and
\begin{align}
\frac{\pt g^{12}_{UV}}{\pt x}
=
2\zeta_{UV} \zeta_{UV}^{12} 
-
\left(
\frac{\pt \psi_{UV}^1}{\pt x}\psi_{UV}^2 - \psi_{UV}^1\frac{\pt \psi^2_{UV}}{\pt x}
\right)
\label{648379fh93huf}
\end{align}
We wish to now identify the conditions under which $(\Ufr, \vartheta)$ will patch together to give an atlas for a deformation $\Xc\ra\Cbb^{0|2}$ of $\Scl$. This means the cocycle condition must be satisfied. In even and odd components this condition states that on intersections $\Uc\cap \Vc$:
\begin{align}
\rho^+_{\Vc\Uc}\circ\rho_{\Uc\Vc}
=
\mathrm{id}
&&
\mbox{and}
&&
\rho^-_{\Vc\Uc}\circ \rho_{\Uc\Vc}
=
\mathrm{id};
\label{eopjfiofh8o3hfo3j}
\end{align}
and on triple intersections $\Uc\cap \Vc\cap \Wc$:
\begin{align}
\rho^+_{\Uc\Wc} = \rho^+_{\Vc\Wc}\circ\rho_{\Uc\Vc}
&&
\mbox{and}
&&
\rho^-_{\Uc\Wc} = \rho^-_{\Vc\Wc}\circ \rho_{\Uc\Vc}.
\label{eocneioeoeioe}
\end{align}
We will investigate the cocycle condition on intersections and triple intersections separately in what follows.

\subsubsection{On Intersections}

\begin{LEM}\label{encioeuifhe7f78}
Let $\vartheta = \{\rho_\xi\}$ where $\rho_\xi = \{\rho_{\Uc\Vc;\xi}\}$ is superconformal. If $(\Ufr,\vartheta)$ defines an atlas for a supermanifold $\Xc$, then on all non-empty intersections $U\cap V$,
\begin{align}
g_{UV}^{12} &= -\zeta_{UV}^2g^{12}_{VU}~~\mbox{and;}
\label{keiohuieuksciwni}
\\
\zeta_{UV}^{12}
&=
-
\zeta_{UV}^2~\zeta^{12}_{VU}
-
\frac{\pt \zeta_{UV}}{\pt x}g_{VU}^{12}.
\label{ejioheovhhvo3f332d}
\end{align}
\end{LEM}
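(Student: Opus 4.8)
The plan is to extract both relations directly from the double-overlap cocycle condition \eqref{eopjfiofh8o3hfo3j}, $\rho^{\pm}_{\Vc\Uc;\xi}\circ\rho_{\Uc\Vc;\xi}=\id$, by substituting the explicit transition data \eqref{y873d73y8dj2d}--\eqref{ggfgfGSVVDVDJHB} and expanding to second order in the odd parameters $\xi=(\xi_1,\xi_2)$. Writing $y=\rho^+_{\Uc\Vc;\xi}=f_{UV}+\delta$ with the nilpotent even shift $\delta:=f^i_{UV}\xi_i\q+g^{12}_{UV}\xi_{12}$, the structural observation driving the computation is that $\delta^2=0$ (each monomial repeats an odd generator), so any $\Vc\Uc$-coefficient composed with $y$ expands exactly as $\phi(y)=\phi(f_{UV})+\phi'(f_{UV})\,\delta$ with no further terms. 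I will also use throughout that every coefficient function $f_{UV},\zeta_{UV},\psi^i_{UV},g^{12}_{UV},\zeta^{12}_{UV}$ is of even parity, hence central; in particular the $\psi^i$ commute with one another.

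First I would sort the identity by total degree in $\xi$. Degree $0$ reproduces the super Riemann surface cocycle, $f_{VU}\circ f_{UV}=\id$ and $\zeta_{VU}(f_{UV})\zeta_{UV}=1$; degree $1$ gives the infinitesimal relation $\psi^i_{VU}(f_{UV})=-\zeta_{UV}^{-1}\psi^i_{UV}$. These lower-degree relations are the inputs. Differentiating them in $x$, and using $f_{UV}'=\zeta_{UV}^2$ from the superconformal condition \eqref{ojfoeofhoifjpie}, then supplies the values $f_{VU}'(f_{UV})=\zeta_{UV}^{-2}$, $\zeta_{VU}'(f_{UV})$ and $(\psi^i_{VU})'(f_{UV})$ that appear once the $\Vc\Uc$-data are Taylor expanded at $f_{UV}$.

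The two relations come from the degree-$2$ part: the coefficient of $\xi_{12}$ in the even equation yields \eqref{keiohuieuksciwni}, and the coefficient of $\xi_{12}\q$ in the odd equation yields \eqref{ejioheovhhvo3f332d}. In the even equation the only non-obvious contribution is a term $\sum_{i,j}f^i_{VU}(f_{UV})\,\psi^j_{UV}\,\xi_i\xi_j$ coming from $f^i_{VU}(y)\xi_i\eta$. Since $\xi_i\xi_j$ is antisymmetric in $i,j$ while $f^i_{VU}=\zeta_{VU}\psi^i_{VU}$, this reduces to the antisymmetric combination $\psi^1_{VU}(f_{UV})\psi^2_{UV}-\psi^2_{VU}(f_{UV})\psi^1_{UV}$, which vanishes once the degree-$1$ relation and commutativity of the $\psi$'s are invoked. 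Dividing the remaining identity by $f_{VU}'(f_{UV})=\zeta_{UV}^{-2}$ then gives \eqref{keiohuieuksciwni} outright.

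The hard part will be the odd relation. Here the analogous degree-$2$ cross terms — one from $\zeta_{VU}(y)\eta$ and one from $\psi^i_{VU}(y)\xi_i$, after moving $\q$ past the $\xi_i$ — do \emph{not} simply cancel: substituting $f^i_{UV}=\zeta_{UV}\psi^i_{UV}$ and the differentiated degree-$1$ relation for $(\psi^i_{VU})'(f_{UV})$ collapses them to a Wronskian-type expression $\psi^1_{UV}{}'\psi^2_{UV}-\psi^1_{UV}\psi^2_{UV}{}'$ in the first-order odd data. The decisive step is that this is exactly the combination controlled by the second-order superconformal constraint \eqref{648379fh93huf}, so one must feed \eqref{648379fh93huf} in to rewrite it, and then use the already-established even relation \eqref{keiohuieuksciwni} to trade $g^{12}_{UV}$ for $g^{12}_{VU}$, bringing everything to the stated form \eqref{ejioheovhhvo3f332d}. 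I expect the real obstacle to be bookkeeping rather than conceptual: tracking the Grassmann signs generated when $\q$ is commuted through the $\xi_i$ in the degree-$2$ monomials, and matching the Wronskian against \eqref{648379fh93huf} consistently with the sign conventions fixed in \eqref{opejoivh839v30455j}, since a single sign slip there produces a spurious residual $\psi$-term in \eqref{ejioheovhhvo3f332d}.
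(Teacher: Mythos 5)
Your handling of the even relation \eqref{keiohuieuksciwni} is correct and coincides with the paper's own argument: expanding $\rho^+_{\Vc\Uc}\circ\rho_{\Uc\Vc}=\id$ in the $\xi_{12}$ coefficient, killing the cross term $f^1_{VU}\psi^2_{UV}-f^2_{VU}\psi^1_{UV}$ via $f^i=\zeta\psi^i$ and the degree-one relation $\psi^i_{UV}=-\zeta_{UV}\psi^i_{VU}$ (the coefficient functions commute), and dividing by $\pt f_{VU}/\pt y=\zeta_{UV}^{-2}$. The gap is in your route to the odd relation \eqref{ejioheovhhvo3f332d}. The direct expansion of the odd cocycle condition is exactly the paper's \eqref{erhehh38h8dh383huefrnuyy}, which carries the residual Wronskian term $\zeta_{UV}^4\bigl(\tfrac{\pt\psi^1_{VU}}{\pt y}\psi^2_{VU}-\psi^1_{VU}\tfrac{\pt\psi^2_{VU}}{\pt y}\bigr)$. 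Feeding \eqref{648379fh93huf} into this does \emph{not} eliminate that term: the substitution merely trades the Wronskian for $2\zeta_{VU}\zeta^{12}_{VU}-\pt g^{12}_{VU}/\pt y$, and after using \eqref{keiohuieuksciwni} to convert $g^{12}_{UV}$ into $g^{12}_{VU}$ you land on \eqref{ejioheovhhvo3f332d} if and only if $\pt g^{12}_{VU}/\pt y = 2\zeta_{VU}\zeta^{12}_{VU}$, i.e.\ if and only if the Wronskian vanishes. But that is precisely \eqref{3gf73g6f8g38h3}, equivalently \eqref{gvhdcsvtvyiwbou39}, which the paper establishes only \emph{after} the Lemma, in Corollary \ref{ueievbhjgbvbeiuno}, and it does so by comparing \eqref{erhehh38h8dh383huefrnuyy} against \eqref{ejioheovhhvo3f332d} — the latter already proved by an independent computation. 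A single equation plus an information-preserving substitution cannot produce both the $\zeta^{12}$ relation and the vanishing of the Wronskian; as written, your derivation begs the question.

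What the paper actually does for \eqref{ejioheovhhvo3f332d} never touches the odd cocycle expansion inside the Lemma. It starts from superconformality \eqref{648379fh93huf} on $U\cap V$, writing $2\zeta_{UV}\zeta^{12}_{UV}=\tfrac{\pt g^{12}_{UV}}{\pt x}+\tfrac{\pt\psi^1_{UV}}{\pt x}\psi^2_{UV}-\psi^1_{UV}\tfrac{\pt\psi^2_{UV}}{\pt x}$, converts all $UV$ data to $VU$ data using the already-established even relations $g^{12}_{UV}=-\zeta^2_{UV}g^{12}_{VU}$, $\psi^i_{UV}=-\zeta_{UV}\psi^i_{VU}$ and the chain rule $\pt y/\pt x=\zeta^2_{UV}$, and then applies \eqref{648379fh93huf} a second time, on $V\cap U$, to collapse the resulting bracket into $-2\zeta^3_{UV}\zeta^{12}_{VU}$. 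The direct odd-component expansion is deliberately reserved for the Corollary, where its \emph{disagreement} with the Lemma's relation is what forces the Wronskian to vanish. To repair your proposal you would have to carry out this superconformality-first computation in addition to your expansion, and extract \eqref{ejioheovhhvo3f332d} and the Wronskian identity from the comparison of the two — at which point you have reproduced the paper's Lemma and Corollary together rather than a shorter proof of the Lemma alone.
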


\begin{proof}
On a single open set $\Uc$ we have $\rho_{\Uc\Uc}(x, \q) = \mathrm{id}$ which implies:
\begin{align}
f^i_{UU} = g^{12}_{UU} = 0
&&
\mbox{and}
&&
\psi^i_{UU} = \zeta_{UU}^{12}=0.
\label{3hr9737fh398f380}
\end{align}
We will consider firstly the even component in what follows. In imposing \eqref{eopjfiofh8o3hfo3j} and using \eqref{3hr9737fh398f380} we find,
\begin{align}
\frac{\pt f_{VU}}{\pt y}f^i_{UV} = - f_{VU}^i\zeta_{UV};
\label{emfiojeiof309fj0j3}
\end{align}
and
\begin{align}
\frac{\pt f_{VU}}{\pt y}g^{12}_{UV} = - g_{VU}^{12} 
- f^1_{VU}\psi^2_{UV} + f^2_{VU}\psi^1_{UV}.
\label{3j903jf093jf093j}
\end{align}
In using \eqref{ojfoeofhoifjpie} and \eqref{emfiojeiof309fj0j3} we will straightforwardly deduce \eqref{keiohuieuksciwni} from \eqref{3j903jf093jf093j}. Regarding the odd component: as we are assuming $\rho_\xi$ is superconformal, we know that \eqref{648379fh93huf} must hold. Now as we have justified \eqref{keiohuieuksciwni}, we are at liberty to use this in what follows. Firstly, from \eqref{emfiojeiof309fj0j3} and \eqref{ojfoeofhoifjpie} we have
\begin{align}
\psi^i_{UV} =-\zeta_{UV}\psi_{VU}^i.
\end{align}
Now starting from \eqref{648379fh93huf}:
\begin{align*}
2\zeta_{UV}\zeta_{UV}^{12}
=&~ 
\frac{\pt g_{UV}^{12}}{\pt x} 
+
\frac{\pt \psi_{UV}^1}{\pt x}\psi_{UV}^2 
-
\psi_{UV}^1\frac{\pt \psi^2_{UV}}{\pt x}
\\
=&~
-2\zeta_{UV}\frac{\pt \zeta_{UV}}{\pt x}g_{VU}^{12} 
- 
\zeta_{UV}^4\frac{\pt g_{VU}^{12}}{\pt y}
\\
&~+
\zeta_{UV}\frac{\pt \zeta_{UV}}{\pt x}\psi_{VU}^1\psi^2_{VU} 
+ 
\zeta_{UV}^4\frac{\pt\psi^1_{VU}}{\pt y}\psi^2_{VU}
\\
&~-
\zeta_{UV}\frac{\pt \zeta_{UV}}{\pt x}\psi^1_{VU}\psi_{VU}^2 
-
\zeta_{UV}^4\psi_{VU}^1\frac{\pt \psi^2_{VU}}{\pt y}
\\
=&~
-
2\zeta_{UV}\frac{\pt \zeta_{UV}}{\pt x}g_{VU}^{12} 
-
\zeta_{UV}^4
\left(
\frac{\pt g_{VU}^{12}}{\pt y}
-
\frac{\pt\psi^1_{VU}}{\pt y}\psi^2_{VU}
+
\psi_{VU}^1\frac{\pt \psi^2_{VU}}{\pt y}
\right)
\\
=&~
-2\zeta_{UV}^3\zeta_{VU}^{12}
-
2\zeta_{UV}\frac{\pt \zeta_{UV}}{\pt x}g_{VU}^{12}.
\end{align*}
The identity in \eqref{ejioheovhhvo3f332d} now follows. 
\end{proof}

\begin{COR}\label{ueievbhjgbvbeiuno}
If the data $(\Ufr, \vartheta)$ from Lemma $\ref{encioeuifhe7f78}$ defines an atlas for an odd, second order deformation $\Xc$ of a super Riemann surface $\Scl$, then
\begin{align}
\frac{\pt g^{12}_{UV}}{\pt x}
=
2\zeta_{UV} \zeta_{UV}^{12}.
\label{3gf73g6f8g38h3}
\end{align}
\end{COR}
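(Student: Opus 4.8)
The plan is to use the superconformality relation \eqref{648379fh93huf} to reduce \eqref{3gf73g6f8g38h3} to the vanishing, on every non-empty overlap $\Uc\cap\Vc$, of the Wronskian
\[
W_{UV} := \frac{\pt \psi^1_{UV}}{\pt x}\psi^2_{UV} - \psi^1_{UV}\frac{\pt \psi^2_{UV}}{\pt x}.
\]
Indeed, \eqref{648379fh93huf} reads $\frac{\pt g^{12}_{UV}}{\pt x} = 2\zeta_{UV}\zeta^{12}_{UV} - W_{UV}$, so \eqref{3gf73g6f8g38h3} is exactly the assertion that $W_{UV}=0$.

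First I would record how $W_{UV}$ behaves under interchanging the two charts. Using the identity $\psi^i_{UV} = -\zeta_{UV}\psi^i_{VU}$ established in the proof of Lemma \ref{encioeuifhe7f78}, together with the chain rule $\frac{\pt}{\pt x} = \zeta_{UV}^2\frac{\pt}{\pt y}$ (a consequence of $\frac{\pt f_{UV}}{\pt x} = \zeta_{UV}^2$ in \eqref{ojfoeofhoifjpie}), a short computation should give $W_{UV} = \zeta_{UV}^4 W_{VU}$, the $\frac{\pt \zeta_{UV}}{\pt x}$-contributions cancelling by antisymmetry of the Wronskian.

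The crux is then to obtain a second expression for $\zeta^{12}_{UV}$, independent of the gluing identity \eqref{ejioheovhhvo3f332d}. Differentiating \eqref{keiohuieuksciwni}, $g^{12}_{UV} = -\zeta_{UV}^2 g^{12}_{VU}$, in $x$, rewriting the derivative of $g^{12}_{VU}$ as a $y$-derivative, and inserting \eqref{648379fh93huf} for the pair $\Vc\Uc$ together with the transformation law $W_{UV} = \zeta_{UV}^4 W_{VU}$, I expect to arrive at
\[
\zeta^{12}_{UV} = -\zeta_{UV}^2\,\zeta^{12}_{VU} - \frac{\pt \zeta_{UV}}{\pt x}\,g^{12}_{VU} + \zeta_{UV}^{-1} W_{UV}.
\]
Comparing this with \eqref{ejioheovhhvo3f332d}, which carries no Wronskian term, forces $\zeta_{UV}^{-1}W_{UV}=0$, hence $W_{UV}=0$; feeding this back into \eqref{648379fh93huf} yields \eqref{3gf73g6f8g38h3}.

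The hard part will be the bookkeeping in this last computation: one must differentiate through the coordinate change without sign errors and keep careful track of the weight of the Wronskian, so that the discrepancy between the superconformality-derived relation and the cohomological gluing law \eqref{ejioheovhhvo3f332d} is seen to be precisely $\zeta_{UV}^{-1}W_{UV}$. Conceptually the point is that the second-order even datum $g^{12}$ is constrained both by the defining superconformality equation \eqref{648379fh93huf} and by the intersection identities of Lemma \ref{encioeuifhe7f78}, and these two constraints are mutually consistent only when the Wronskian of the first-order odd moduli $\psi^1,\psi^2$ vanishes identically on overlaps.
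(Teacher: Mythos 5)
Your reduction of \eqref{3gf73g6f8g38h3} to the vanishing of the Wronskian $W_{UV}$, and your transformation law $W_{UV}=\zeta_{UV}^{4}W_{VU}$, are both correct and both appear (at least implicitly) in the paper. The gap is in your third step, and it is a genuine circularity: the ``second expression'' for $\zeta^{12}_{UV}$ that you propose to derive uses exactly the same inputs from which the paper derives \eqref{ejioheovhhvo3f332d} in the first place. Inspect the proof of Lemma \ref{encioeuifhe7f78}: there \eqref{ejioheovhhvo3f332d} is obtained by starting from the superconformality relation \eqref{648379fh93huf} in the ordering $\Uc\Vc$, differentiating $g^{12}_{UV}=-\zeta^{2}_{UV}g^{12}_{VU}$, converting $x$-derivatives to $y$-derivatives, rewriting $\psi^{i}_{UV}$ in terms of $\psi^{i}_{VU}$, and finally invoking \eqref{648379fh93huf} again in the ordering $\Vc\Uc$ --- precisely the computation you describe. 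Two computations from identical premises cannot disagree: carried out with one consistent set of conventions, your derivation reproduces the same identity as the Lemma's, and the comparison returns $0=0$ rather than $W_{UV}=0$. A quick sanity check that no constraint on the Wronskian can come from this route: take $f_{UV}=x$, $\zeta_{UV}=1$, with $\psi^{1},\psi^{2}$ and $g^{12}$ arbitrary and $\zeta^{12}$ defined by \eqref{648379fh93huf}; one verifies directly that the inverse transition functions are again superconformal whatever $W_{UV}$ is (superconformality is preservation of a distribution and is automatically inherited by inverses). So superconformality in both orderings together with \eqref{keiohuieuksciwni} is mutually consistent for arbitrary $W_{UV}$, and the residual term $\zeta_{UV}^{-1}W_{UV}$ you isolate is a symptom of mismatched Wronskian bookkeeping between your computation and the stated form of \eqref{ejioheovhhvo3f332d}, not a contradiction you can harvest.

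The independent input that the paper's proof actually uses, and that your proposal never touches, is the raw cocycle condition on the transition data: expanding $\rho^{-}_{\Vc\Uc}\circ\rho_{\Uc\Vc}=\mathrm{id}$ (that is, \eqref{eopjfiofh8o3hfo3j}, or \eqref{eocneioeoeioe} with $\Wc=\Uc$) at order $\xi_{12}\q$, using only composition of the transition functions and the first-order relations \eqref{ojfoeofhoifjpie}, and \emph{not} the second-order superconformality equation \eqref{648379fh93huf}. This is what produces \eqref{erhehh38h8dh383huefrnuyy}, in which the Wronskian enters with a different weight than in the superconformality-derived \eqref{ejioheovhhvo3f332d}; only the comparison of these two relations of genuinely different origin forces \eqref{gvhdcsvtvyiwbou39} and hence \eqref{3gf73g6f8g38h3}. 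To repair your argument you must perform that order-$\xi_{12}\q$ expansion of the composition itself; and since the conclusion hinges entirely on the relative sign and $\zeta$-weight of the Wronskian terms on the two sides, this is exactly the bookkeeping you flagged as ``the hard part'' but then bypassed by borrowing \eqref{ejioheovhhvo3f332d} as an independent fact, which it is not.
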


\begin{proof}
We want to show:
\begin{align}
\frac{\pt\psi^1_{VU}}{\pt y}\psi^2_{VU} 
&=
\frac{\pt \psi^2_{VU}}{\pt y}\psi^1_{VU}.
\label{gvhdcsvtvyiwbou39}
\end{align}
Recall that the relation for $\zeta^{12}$ on intersections in \eqref{ejioheovhhvo3f332d} was obtained by appealing to the equation \eqref{648379fh93huf} characterising superconformality. In appealing directly to the transition data and enforcing \eqref{eocneioeoeioe} however, we will obtain another relation for $\zeta^{12}$ on intersections. It is:
\begin{align}
\zeta_{UV}^{12}
&=
-
\zeta_{UV}^2~\zeta^{12}_{VU}
-
\frac{\pt\zeta_{UV}}{\pt x}g_{VU}^{12}
-
\zeta_{UV}^4
\left(
\frac{\pt\psi^1_{VU}}{\pt y}\psi^2_{VU}
-
\psi^1_{VU}
\frac{\pt \psi^2_{VU}}{\pt y}
\right)
\label{erhehh38h8dh383huefrnuyy}
\end{align}
In comparing \eqref{erhehh38h8dh383huefrnuyy} with \eqref{ejioheovhhvo3f332d} the identity in \eqref{gvhdcsvtvyiwbou39} follows. 
\end{proof}

We turn now to \eqref{eocneioeoeioe} concerning triple intersections.

\subsubsection{On Triple Intersections}

\begin{LEM}\label{fnbfhbvhbvhbhvrhjvb}
Let $\vartheta = \{\rho_\xi\}$ where $\rho_\xi = \{\rho_{\Uc\Vc;\xi}\}$ is superconformal. If $(\Ufr,\vartheta)$ defines an atlas for a supermanifold $\Xc$, then $f^i = \{f^i_{UV}\}$ and $\psi^i = \{\psi^i_{UV}\}$ define $1$-cocycles valued in $\Tc_C^{1/2}$. 
\end{LEM}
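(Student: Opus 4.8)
The condition to be verified is nothing other than the triple--overlap cocycle relation \eqref{eocneioeoeioe}, read off at first order in the odd parameters $\xi_1,\xi_2$. Conceptually, $f^i$ and $\psi^i$ are the even and odd components of the Kodaira--Spencer field $\pt_{\xi_i}\rho_{\Uc\Vc;\xi}\big|_{\xi=0}$, a section of $\Tc_\Scl$ over $\Scl$ restricted to $\Uc\cap\Vc$; superconformality of $\{\rho_{\Uc\Vc;\xi}\}$ pins its class to the superconformal deformation sheaf, which on the reduced curve $C$ is $\Tc_C^{1/2}$ (this is the content of the Remark identifying order--one deformations with $H^1(C,\Tc_C^{1/2})$). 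So the plan is to extract from \eqref{eocneioeoeioe} the \v{C}ech cocycle identities obeyed by $f^i$ and $\psi^i$ and to match them against the cocycle condition for $\Tc_C^{1/2}$.

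Concretely, I would substitute the expansions \eqref{y873d73y8dj2d}--\eqref{ggfgfGSVVDVDJHB} for $\rho^\pm_{\Uc\Vc;\xi}$, $\rho^\pm_{\Vc\Wc;\xi}$ and $\rho^\pm_{\Uc\Wc;\xi}$ into \eqref{eocneioeoeioe} and Taylor--expand the composition $\rho_{\Vc\Wc}\circ\rho_{\Uc\Vc}$ in the nilpotent variables $\q,\xi_1,\xi_2$ (using $\q^2=\xi_a^2=0$), with all data on $\Vc,\Wc$ evaluated at $y=f_{UV}(x)$. Comparing the coefficient of $\xi_i$ in the odd equation $\rho^-_{\Uc\Wc}=\rho^-_{\Vc\Wc}\circ\rho_{\Uc\Vc}$ yields
\[
\psi^i_{UW} = \psi^i_{VW} + \zeta_{VW}\,\psi^i_{UV}.
\]
Since \eqref{ojfoeofhoifjpie} gives $\zeta_{UV}^2=\pt f_{UV}/\pt x$, the cochain $\{\zeta_{UV}\}$ is a square root of the Jacobian cocycle of $\Tc_C$, i.e. the transition cocycle of $\Tc_C^{1/2}$, and the displayed identity is exactly the \v{C}ech $1$--cocycle condition for a $\Tc_C^{1/2}$--valued cochain. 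Comparing the coefficient of $\xi_i\q$ in the even equation yields the companion identity $f^i_{UW}=\zeta_{VW}^2 f^i_{UV}+\zeta_{UV}f^i_{VW}$; substituting $f^i_{UV}=\zeta_{UV}\psi^i_{UV}$ from \eqref{ojfoeofhoifjpie} together with the spin--cocycle relation $\zeta_{UW}=\zeta_{UV}\zeta_{VW}$, this is equivalent to the $\psi^i$--relation above, so $\{f^i\}$ represents the same class in $H^1(C,\Tc_C^{1/2})$.

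To confirm these collections are honest cochains, and not merely that the triple--overlap obstruction cancels, I would also record the double--overlap relation $\psi^i_{UV}=-\zeta_{UV}\psi^i_{VU}$ already produced in the proof of Lemma \ref{encioeuifhe7f78}, which is precisely the skew--symmetry of a $\Tc_C^{1/2}$--valued $1$--cochain. The one genuinely delicate step is the bookkeeping in the middle paragraph: one must expand the composed transition map correctly in the nilpotent coordinates and track the chain--rule factor $\pt f_{VW}/\pt y=\zeta_{VW}^2$, verifying that it assembles into the transition functions of $\Tc_C^{1/2}$ rather than some other power of the spin bundle. The remaining manipulations are routine.
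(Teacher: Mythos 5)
Your proposal is correct and follows essentially the same route as the paper: expand the triple-overlap condition \eqref{eocneioeoeioe}, read off the coefficients of $\xi_i$ and $\xi_i\q$ to get $\psi^i_{UW}=\zeta_{VW}\psi^i_{UV}+\psi^i_{VW}$ and $f^i_{UW}=\frac{\pt f_{VW}}{\pt y}f^i_{UV}+\zeta_{UV}f^i_{VW}$, and use the superconformal relations \eqref{ojfoeofhoifjpie} (i.e.\ $\zeta^2_{UV}=\pt f_{UV}/\pt x$ and $f^i=\zeta\psi^i$) to identify both as the \v{C}ech $1$-cocycle condition for $\Tc_C^{1/2}$. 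Your closing observation that the two identities are interchangeable via superconformality is precisely the paper's ``alternatively'' remark, so there is nothing to fix.
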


\begin{proof}
This follows immediately from expanding \eqref{eocneioeoeioe}. For $f^i$ we find,
\begin{align}
f^i_{UW}
=
\frac{\pt f_{VW}}{\pt y}f_{UV}^i + \zeta_{UV}f^i_{VW}
&&
\mbox{or}
&&
f^i_{UW}\q\frac{\pt}{\pt z}
=
f^i_{UV}\q\frac{\pt}{\pt y}
+
f^i_{VW}\eta\frac{\pt}{\pt z}
\label{efoih48fh9hf93jf03}
\end{align}
where $z$ denotes the local even coordinate on $W$. The latter expression shows that $f^i$ will be a $1$-cocycle. Similarly, for $\psi^i$ we will find
\begin{align}
\psi^i_{UW}\frac{\pt}{\pt \gam}
=
\psi^i_{UV}\frac{\pt}{\pt \eta}
+
\psi^i_{VW}\frac{\pt}{\pt \gam}.
\label{emoij48fjjf03jf0j30}
\end{align}
for $\gam$ the local odd coordinate on $W$. Hence $\psi^i$ will be a 1-cocycle also. Alternatively, given \eqref{efoih48fh9hf93jf03} and the superconformal conditions in \eqref{ojfoeofhoifjpie}, we will recover \eqref{emoij48fjjf03jf0j30} and vice-versa which shows that they must both be 1-cocycles valued in the same sheaf, which is $\Tc_C^{1/2}$.
\end{proof}

Perhaps more interesting is the 1-cochain $g^{12} = \{g^{12}_{UV}\}$. From \eqref{keiohuieuksciwni} in Lemma \ref{encioeuifhe7f78} and superconformality we know that $g^{12}\in C^1(\Ufr, \Tc_C)$. In order for it to be a $\Tc_C$-valued 1-cocycle however, it is necessary for $(\dt g^{12})_{UVW} = 0$. We will see that this will not be true in general.

\begin{PROP}\label{kvbbvhbvhjbyur}
Let $\vartheta = \{\rho_\xi\}$ where $\rho_\xi = \{\rho_{\Uc\Vc;\xi}\}$ is superconformal. If $(\Ufr,\vartheta)$ defines an atlas for a supermanifold $\Xc$, then on $U\cap V\cap W$
\begin{align}
g^{12}_{UW}
\frac{\pt}{\pt z}
-
g^{12}_{UV}\frac{\pt}{\pt y}
-
g^{12}_{VW}\frac{\pt}{\pt z}
=
(\psi^1\otimes\psi^2 - \psi^2\otimes\psi^1)_{UVW}
\label{hbvjbjvyvy4biu393}
\end{align}
\end{PROP}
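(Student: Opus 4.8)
The plan is to read off the coefficient of $\xi_{12}$ from the even component of the triple-intersection cocycle condition \eqref{eocneioeoeioe}, i.e. from $\rho^+_{\Uc\Wc} = \rho^+_{\Vc\Wc}\circ\rho_{\Uc\Vc}$. Writing $(x,\q)$, $(y,\eta)$, $(z,\gam)$ for coordinates on $\Uc,\Vc,\Wc$, I would substitute the inner map \eqref{y873d73y8dj2d}--\eqref{ggfgfGSVVDVDJHB} into the outer map $\rho^+_{\Vc\Wc}(y,\eta) = f_{VW}(y) + f^i_{VW}(y)\,\xi_i\eta + g^{12}_{VW}(y)\,\xi_{12}$. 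Setting $y = f_{UV} + \delta y$ with $\delta y = f^i_{UV}\xi_i\q + g^{12}_{UV}\xi_{12}$, I would Taylor expand $f_{VW}(y)$, $f^i_{VW}(y)$ and $g^{12}_{VW}(y)$ about $y=f_{UV}$, where $f_{VW}(f_{UV}) = f_{UW}$ by the reduced cocycle condition at $\xi=0$.

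The first thing to record is that $(\delta y)^2 = 0$: every monomial in the square contains either $\q^2$ or a repeated odd $\xi$, so all Taylor expansions truncate at first order. Keeping only the degree-two part in $\xi$, three contributions survive (all coefficients evaluated at $y=f_{UV}$): from $f_{VW}(y)$ the term $\tfrac{\pt f_{VW}}{\pt y}g^{12}_{UV}$; from $g^{12}_{VW}(y)$ the term $g^{12}_{VW}$; and from $f^i_{VW}(y)\,\xi_i\eta$ the term $f^1_{VW}\psi^2_{UV} - f^2_{VW}\psi^1_{UV}$. Here I would note that only the $\psi^j_{UV}\xi_j$ piece of $\eta$ contributes to this last term, since pairing $\xi_i$ with the $\zeta_{UV}\q$ piece produces a factor $\q^2$. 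As a sanity check, the same extraction at degree one in $\xi$ reproduces the cocycle relation \eqref{efoih48fh9hf93jf03} for $f^i$. Equating the $\xi_{12}$-coefficient with that of $\rho^+_{\Uc\Wc}$, namely $g^{12}_{UW}$, yields
\begin{align*}
g^{12}_{UW} = \frac{\pt f_{VW}}{\pt y}\,g^{12}_{UV} + g^{12}_{VW} + f^1_{VW}\psi^2_{UV} - f^2_{VW}\psi^1_{UV}.
\end{align*}

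To recast this as \eqref{hbvjbjvyvy4biu393}, I would convert frames via $\tfrac{\pt}{\pt y} = \tfrac{\pt f_{VW}}{\pt y}\tfrac{\pt}{\pt z}$, so that the left-hand side of \eqref{hbvjbjvyvy4biu393} equals $\big(g^{12}_{UW} - \tfrac{\pt f_{VW}}{\pt y}g^{12}_{UV} - g^{12}_{VW}\big)\tfrac{\pt}{\pt z} = (f^1_{VW}\psi^2_{UV} - f^2_{VW}\psi^1_{UV})\tfrac{\pt}{\pt z}$. Finally, applying the superconformal relation $f^i = \zeta\psi^i$ from \eqref{ojfoeofhoifjpie} gives $f^i_{VW} = \zeta_{VW}\psi^i_{VW}$, and since the $\psi^i$ are $\Tc_C^{1/2}$-valued $1$-cocycles (Lemma \ref{fnbfhbvhbvhbhvrhjvb}) with $\tfrac{\pt}{\pt\eta} = \zeta_{VW}\tfrac{\pt}{\pt\gam}$ under the $V\to W$ transition, the factor $\zeta_{VW}$ is precisely the one converting the $V$-frame value $\psi^i_{UV}$ to the $W$-frame; the antisymmetric combination is then exactly $(\psi^1\otimes\psi^2 - \psi^2\otimes\psi^1)_{UVW}$ landing in $\Tc_C^{1/2}\otimes\Tc_C^{1/2}\cong\Tc_C$.

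The main obstacle I anticipate is not conceptual but bookkeeping: carefully tracking Grassmann parities and signs through the composition so as to isolate exactly which monomials survive into the $\xi_{12}$-coefficient, and then pinning down the ordering convention in the definition of $(\psi^1\otimes\psi^2 - \psi^2\otimes\psi^1)_{UVW}$ as a cup product so that its sign agrees with the antisymmetric combination $f^1_{VW}\psi^2_{UV} - f^2_{VW}\psi^1_{UV}$ produced above. The observation $(\delta y)^2 = 0$ is what collapses the calculation to first-order Taylor data, and I would emphasise it at the outset.
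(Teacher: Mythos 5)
Your proposal is correct and follows essentially the same route as the paper's proof: expanding the even triple-intersection cocycle condition \eqref{eocneioeoeioe} to extract the $\xi_{12}$-coefficient, obtaining $g^{12}_{UW} = \tfrac{\pt f_{VW}}{\pt y}g^{12}_{UV} + g^{12}_{VW} + f^1_{VW}\psi^2_{UV} - f^2_{VW}\psi^1_{UV}$, then applying the superconformal relation $f^i = \zeta\psi^i$ from \eqref{ojfoeofhoifjpie} and identifying the antisymmetric remainder with the concatenation (cup) product of the cocycles $\psi^1,\psi^2$ under $\Tc_C^{1/2}\otimes\Tc_C^{1/2}\cong\Tc_C$. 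Your explicit nilpotence observation $(\delta y)^2 = 0$ and the sign-convention caveat on the cup product are just carefully spelled-out versions of steps the paper leaves implicit in ``Expanding \eqref{eocneioeoeioe} we find.''
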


\begin{proof}
Expanding \eqref{eocneioeoeioe} we find
\begin{align*}
g^{12}_{UW}
&=
\frac{\pt f_{VW}}{\pt y}g^{12}_{UV} + g^{12}_{VW}
+
\psi^2_{UV}f^1_{VW}
-
\psi^1_{UV} f^2_{VW}
\\
&=
\frac{\pt f_{VW}}{\pt y}g^{12}_{UV} + g^{12}_{VW}
+
\zeta_{VW}
\left(
\psi^2_{UV}\psi^1_{VW}
-
\psi^1_{UV}\psi^2_{VW}
\right).
\end{align*}
Now from the general theory of tensor-products of sheaves of abelian groups $\Ac$ and $\Bc$ we have on the cochains a map $C^p(\Ufr, \Ac)\otimes C^q(\Ufr, \Bc) \ra C^{p+q}(\Ufr, \Ac\otimes\Bc)$. This map is given by concatenation and so on the 1-cocycles $\psi^1, \psi^2$ we have
\begin{align}
(\psi^1\otimes\psi^2)_{UVW}
&=
\psi^1_{UV}\psi^2_{VW}\frac{\pt}{\pt \eta}\otimes\frac{\pt}{\pt \gam}
\notag
\\
&=
\zeta_{VW}\psi^1_{UV}\psi^2_{VW}\frac{\pt}{\pt \gam}\otimes\frac{\pt}{\pt \gam}
\notag
\\
&\equiv
\zeta_{VW}\psi^1_{UV}\psi^2_{VW}
\frac{\pt}{\pt z}
\label{ejkneufbu3hf3hf83f80}
\end{align}
where in \eqref{ejkneufbu3hf3hf83f80} we have used the isomorphism $\Tc_C^{1/2}\otimes\Tc^{1/2}_C\cong \Tc_C$ to identify $(\pt/\pt \gam)\otimes(\pt/\pt \gam)$ with $\pt/\pt z$ over $W$. The present proposition now follows.
\end{proof}

\begin{REM}\emph{The constraint in \eqref{hbvjbjvyvy4biu393} can always be satisfied on Riemann surfaces since: (1) we know from the proof of Proposition \ref{kvbbvhbvhjbyur} that the right-hand side of \eqref{hbvjbjvyvy4biu393} will be a $2$-cocycle; (2) that the left-hand side of \eqref{hbvjbjvyvy4biu393} is the coboundary of a $1$-cochain; and (3)
that $H^2(C,\Ac) = 0$ for any sheaf of abelian groups $\Ac$ for dimensional reasons.\footnote{\label{fh89hf983jf98j3}If we were in a more general setting where the object under deformation had (even) dimension greater than one, the bracket $[\psi_1, \psi_2]$ on the right-hand side of \eqref{hbvjbjvyvy4biu393} would represent an obstruction to the existence of a deformation.} }\end{REM}

Regarding $\zeta^{12}$, there is little to say about this object on triple intersections. For our purposes in this article, the characterisation in Corollary \ref{ueievbhjgbvbeiuno} will be sufficient. We turn now to the construction of the Kodaira-Spencer map for these deformations.

\subsection{The Kodaira-Spencer Map}
An odd, $n$-th order deformation $\Xc$ has dimension $(1|n+1)$. It will admit a superconformal atlas $(\Ufr, \vartheta)$, for $\vartheta = \{\rho_\xi\}_{\xi\in \Cbb^{0|n}}$ and:
\begin{align}
 \rho_{\Uc\Vc; \xi}^+ &= f_{UV} + f^i_{UV}~\xi_i\q + f^{ijk}_{UV}~\xi_{ijk}\q + \ldots + g^{ij}_{UV}~\xi_{ij} + \ldots
 ~~
 \mbox{and}
 \label{hbbubeubcdioiorjf4jf984jf904j4fj389f839jf9}
 \\
 \rho_{\Uc\Vc,\xi}^- &= \zeta_{UV}~\q + \zeta_{UV}^{ij}~\xi_{ij}\q + \ldots +
  \psi^i_{UV}~\xi_i + \psi_{UV}^{ijk}~\xi_{ijk} + \ldots 
 \label{dioiorjf4jf984jf904j4fj389f839jf9}
 \end{align}
From this atlas we can construct the following vector on each intersection:
\begin{align}
\mathrm{KS}_{\Uc\Vc;0} &= \left(\mathrm{KS}_{\Uc\Vc;0}^1, \ldots, \mathrm{KS}^n_{\Uc\Vc;0}\right)
\label{eiojeofh80jvepoeo}
\end{align}
where 
\begin{align}
\mathrm{KS}_{\Uc\Vc;0}^a
\notag
&= \left.(\vartheta_{\Uc\Vc})_*\left(\frac{\pt}{\pt \xi_a}\right)\right|_{\xi = 0}\\
&= f^a_{UV}(x)~\q\frac{\pt}{\pt y} + \psi^a_{UV}\frac{\pt}{\pt \eta}.
\label{ookpj02j0j0382u80jd8PMOmrm}
\end{align} 
In light of Lemma \ref{fnbfhbvhbvhbhvrhjvb}, which clearly holds for deformations of any order, we see that the $1$-cochain $\{\mathrm{KS}_{\Uc\Vc;0}^a\}$ will be a $1$-cocycle on $\Ufr$ valued in $\Tc_C^{1/2}$. The Kodaira-Spencer class of $\Xc$ is defined as follows.

\begin{DEF}
\emph{Let $\Xc$ be a deformation of a super Riemann surface $\Scl$. The $a$-th Kodaira-Spencer class of $\Xc$, denoted $\mathrm{KS}^a_0(\pi_\Xc)(\pt/\pt\xi)$, is the class 
\[
\mathrm{KS}^a_0(\pi_\Xc)\left(\frac{\pt}{\pt \xi}\right) := \left[\varinjlim_\Ufr\left\{\left.\frac{\pt \vartheta_{\Uc\Vc}}{\pt \xi_a}\right|_{\xi = 0}\right\}\right]
\]
for $(\Ufr, \vartheta)$ is a superconformal atlas of $\Xc$.\footnote{The limit over $\Ufr$ is taken over common refinement on the underlying space $\Xc_\red$. Indeed, for the kinds of supermanifolds considered in this article, their topology is concentrated in their reduced part.} The square brackets $[-]$ denote `cohomology class'. We denote by $\mathrm{KS}_0(\pi_\Xc)(\frac{\pt}{\pt \xi})$ the vector $(\mathrm{KS}^a_0(\pi_\Xc)(\frac{\pt}{\pt \xi_a}))$.}
\end{DEF}

It is more natural to study the Kodaira-Spencer \emph{map}, $\mathrm{KS}_0(\pi_\Xc)$, which is the following $\Cbb$-linear map,
\begin{align*}
\mathrm{KS}_0(\pi_\Xc) : T_0\Cbb^{0|n} \lra H^1(C, \Tc_C^{1/2})^{\oplus n}
&&\mbox{given by}&&
\frac{\pt}{\pt \xi} \longmapsto \mathrm{KS}_0(\pi_\Xc)\left(\frac{\pt}{\pt \xi}\right). 
\end{align*}
For odd, infinitesimal deformations $\Xc$, the image of the Kodaira-Spencer map can be identified with the primary obstruction class of $\Xc$. This can be inferred from Donagi and Witten's example of a non-split deformation in \cite[pp. 32-3]{DW1} and is discussed in more detail in \cite[Ch. 6]{BETTPHD}. In particular, we find: \emph{if $\Xc$ is an odd, infinitesimal deformation, it will be split if and only if its Kodaira-Spencer map vanishes}. A natural question concerns the analogous statement for deformations of higher order. 

We shall see in the following section that the analogous statement will not hold in general.

\subsection{Non-Split Deformations of Second Order}
As a result of our deliberations so far, we can be assured that the following construction of an atlas will define an odd deformation of second order since it will be superconformal.

\begin{CON}\label{enconui4fj4oif3pk3}
Let $\Theta\in H^1(C, \Tc_C^{1/2})$ be fixed and denote by $\{\Theta_{UV}\}$ a $1$-cocycle representative with respect to a cover $\Ufr$ of $C$. Consider data $(\Ufr, \vartheta)$, where $\vartheta  = \{\rho_\xi\}$ is given by,
\begin{align*}
\rho_{\Uc\Vc; \xi}^+ = f_{UV} + \frac{1}{2}\zeta_{UV}\Theta_{UV}(\xi_1+\xi_2)\q;
&&
\mbox{and}
&&
\rho_{\Uc\Vc,\xi}^- = \zeta_{UV}~\q +
\frac{1}{2}\Theta_{UV}(\xi_1 + \xi_2).
\end{align*}
Suppose $f^\p = \zeta^2$. Then the data $(\Ufr,\vartheta)$ will define an atlas for an odd, second order deformation $\Xc$ of a super Riemann surface $\Scl$. Moreover, by \eqref{ookpj02j0j0382u80jd8PMOmrm} we see that $\mathrm{KS}^i_0(\pi_\Xc) = \Theta$ for $i = 1, 2$. 
\end{CON}

\begin{LEM}\label{f64gf684f3h9}
Let $C$ be a Riemann surface with $H^1(C, \Tc_C)\neq0$. Then there exist non-split, second order deformations of $\Scl = \Pi \Om^{1/2}_C$ whose Kodaira-Spencer map vanishes.
\end{LEM}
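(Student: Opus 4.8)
The plan is to exhibit a superconformal atlas whose first-order data is identically zero --- forcing the Kodaira--Spencer map to vanish --- while its quadratic coefficient $g^{12}$ represents a nonzero class in $H^1(C,\Tc_C)$, so that the primary obstruction class survives and $\Xc$ is non-split by Lemma \ref{pekpfk49fk94p}. This is the deformation of Construction \ref{enconui4fj4oif3pk3} with the order-one terms switched off. Writing the spin transition data of $\Scl = \Pi\Om^{1/2}_C$ as $\{f_{UV},\zeta_{UV}\}$, I would set $f^i_{UV} = \psi^i_{UV} = 0$ for $i=1,2$ in \eqref{y873d73y8dj2d}--\eqref{ggfgfGSVVDVDJHB}, keeping only the $\xi_{12}$-terms:
\begin{align*}
\rho_{\Uc\Vc;\xi}^+ = f_{UV} + g^{12}_{UV}\,\xi_{12},
&&
\rho_{\Uc\Vc;\xi}^- = \zeta_{UV}\,\q + \zeta^{12}_{UV}\,\xi_{12}\q.
\end{align*}

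First I would fix the data. Since $H^1(C,\Tc_C)\neq 0$ by hypothesis, choose a nonzero class together with a $\Tc_C$-valued \v{C}ech $1$-cocycle representative $\{g^{12}_{UV}\}$ subordinate to $\Ufr$. With $\psi^i=0$ the superconformality relation \eqref{648379fh93huf} collapses to $\frac{\pt g^{12}_{UV}}{\pt x} = 2\zeta_{UV}\zeta^{12}_{UV}$, so I would simply define
\begin{align*}
\zeta^{12}_{UV} := \frac{1}{2\zeta_{UV}}\frac{\pt g^{12}_{UV}}{\pt x},
\end{align*}
which is holomorphic because $\zeta_{UV}$ is nowhere vanishing. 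By Lemma \ref{piorjviorjio5of04joijejj0jf039jpiorjviorjio5of04joijejj0jf039j} (applied as in Illustration \ref{ijodjriohtsayaubdiebiuh3hjoi}) each $\rho_{\Uc\Vc;\xi}$ is then superconformal.

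The substantive step is to check that this data patches, i.e. that the cocycle conditions \eqref{eopjfiofh8o3hfo3j} and \eqref{eocneioeoeioe} hold. On intersections, the relation \eqref{keiohuieuksciwni} is precisely the antisymmetry automatically satisfied by a $\Tc_C$-valued \v{C}ech cochain, while \eqref{ejioheovhhvo3f332d} follows from superconformality exactly as in the proof of Lemma \ref{encioeuifhe7f78}; the compatibility \eqref{gvhdcsvtvyiwbou39} of Corollary \ref{ueievbhjgbvbeiuno} is vacuous since $\psi^i=0$. The decisive point occurs on triple intersections: Proposition \ref{kvbbvhbvhjbyur} identifies the coboundary of $g^{12}$ with the $\psi$-bracket on the right of \eqref{hbvjbjvyvy4biu393}, which vanishes identically here, so $\{g^{12}_{UV}\}$ is a genuine $\Tc_C$-valued $1$-cocycle --- exactly as chosen --- and the triple-intersection conditions for the remaining even components reduce to $f^i = \psi^i = 0$ via Lemma \ref{fnbfhbvhbvhbhvrhjvb}. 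It remains only to confirm the triple-intersection condition for the odd coefficient $\zeta^{12}$, which I expect to obtain by differentiating the cocycle identity for $g^{12}$ through the defining formula for $\zeta^{12}$ and the superconformal relation \eqref{3gf73g6f8g38h3}.

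Finally I would read off both conclusions. Since $f^i_{UV} = \psi^i_{UV} = 0$ identically, the Kodaira--Spencer cocycle \eqref{ookpj02j0j0382u80jd8PMOmrm} is zero, hence $\mathrm{KS}_0(\pi_\Xc)$ vanishes. On the other hand, by Lemma \ref{pekpfk49fk94p} the primary obstruction class of $\Xc$ is represented by $\{g^{12}_{UV}\,\xi_{12}\,\pt/\pt y\}$; its component in the $H^1(C,\Tc_C)$ summand of $H^1(C,\wedge^2\Ec\otimes\Tc_C)$ --- the summand coming from $\xi_1\wedge\xi_2$, since the deformation directions $\xi_i$ span a trivial $\Oc_C$-factor of the modelling bundle --- is exactly the chosen nonzero class, so $\om_{(M,E)}\neq 0$ and $\Xc$ is non-split. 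I expect the main obstacle to be precisely the triple-intersection bookkeeping for $\zeta^{12}$ flagged above (on which the surrounding remark deliberately says little), together with confirming that the $g^{12}$ contribution genuinely persists in $H^1(C,\Tc_C)$ rather than being a coboundary removable by a change of superconformal atlas.
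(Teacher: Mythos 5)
Your proposal is correct and takes essentially the same route as the paper: the paper's proof also starts from Construction \ref{enconui4fj4oif3pk3} with $\Theta = 0$ and perturbs it by a nontrivial cocycle $g^{12}\in Z^1(\Ufr, \Tc_C)$, setting $\zeta^{12}_{UV} = \tfrac{1}{2}\zeta_{UV}^{-1}\,\pt g^{12}_{UV}/\pt x$ exactly as you do, with the vanishing of the right-hand side of \eqref{hbvjbjvyvy4biu393} (since $\psi^i = 0$) guaranteeing the patching. Your additional bookkeeping on intersections and your identification of the $H^1(C,\Tc_C)$ summand of $H^1(C,\wedge^2\Ec\otimes\Tc_C)$ are sound and, if anything, more explicit than the paper's own brief argument.
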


\begin{proof}
Consider the example of an odd, second order deformation $\Xc$ in Construction \ref{enconui4fj4oif3pk3}. Observe for the superconformal atlas $(\Ufr, \vartheta)$ for $\Xc$ that the right-hand side of \eqref{hbvjbjvyvy4biu393} will vanish identically. Hence if $g^{12} = \{g^{12}_{UV}\}$ is a $1$-cocycle which defines a non-trivial element in $H^1(C, \Tc_C)$,  the data $\tilde\vartheta = \{\tilde\rho_\xi\}$ given by
\begin{align*}
\tilde\rho^{+}_{\Uc\Vc}
=
\rho^+_{\Uc\Vc} + g^{12}_{UV}\xi_{12}
&&
\mbox{and}
&&
\tilde\rho^-_{\Uc\Vc}
=
\rho_{\Uc\Vc}^- + \frac{1}{2}\zeta_{UV}^{-1}\frac{\pt g_{UV}^{12}}{\pt x}\xi_{12}\q
\end{align*}
will define a superconformal atlas for another odd, second order deformation $\widetilde\Xc$. From the expression of the Kodaira-Spencer class in \eqref{ookpj02j0j0382u80jd8PMOmrm} note that if $\Xc$ is such that $\mathrm{KS}_0(\pi_\Xc) = 0$, then $\mathrm{KS}_0(\pi_{\widetilde\Xc}) = 0$. 
 Take representatives $\{\Theta_{UV}\}= 0$. Then from Lemma  \ref{pekpfk49fk94p} see that $g^{12}$ will be a cocycle representative for the primary obstruction class of $\widetilde\Xc$. Since we assume $g^{12}\nsim 0$, we see that $\widetilde\Xc$ will be an odd, second order deformation of $\Scl$ whose Kodaira-Spencer map vanishes and yet is non-split.
\end{proof}

\section{Splitting Odd Second Order Deformations}
\label{r83jf893jf0j30}

In this section we look to relate the Kodaira-Spencer map of a second order deformation $\Xc$ with its primary obstruction class.

\subsection{Exterior Powers and the Obstruction Class}
Our starting point is the following preliminary result regarding exterior powers of sheaves of modules from \cite[pp. 127-8] {HARTALG}:

\begin{LEM}\label{f89hf89h03jfp}
Let $(X, \Oc_X)$ be a locally ringed space and suppose $\Ac\hookrightarrow \Ec\twoheadrightarrow \Bc$ is a short exact sequence of locally free sheaves of $\Oc_X$-modules. Then the $k$-th exterior power $\wedge^k\Ec$ admits a finite filtration,
\[
\wedge^k\Ec \supseteq \Fc^1\supseteq \Fc^2\supseteq \cdots\supseteq\Fc^k\supseteq 0
\]
with successive quotients satisfying
\[
\Fc^l/\Fc^{l+1} \cong \wedge^l\Ac\otimes\wedge^{k-l}\Bc
\]
for $l =0, \ldots, k$. \qed
\end{LEM}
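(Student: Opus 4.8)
The plan is to construct the filtration explicitly and then identify its successive quotients by reducing to a local splitting of the given sequence. For each $l$ between $0$ and $k$, I would define $\Fc^l\subseteq \wedge^k\Ec$ to be the image of the natural map
\[
\wedge^l\Ac \otimes \wedge^{k-l}\Ec \lra \wedge^k\Ec
\]
induced by the inclusion $\Ac\hookrightarrow\Ec$ in the first $l$ slots followed by exterior multiplication. Then $\Fc^0 = \wedge^k\Ec$ and $\Fc^{k+1}=0$, and the chain is visibly decreasing: a product of $l+1$ sections of $\Ac$ with $k-l-1$ sections of $\Ec$ is, in particular, a product of $l$ sections of $\Ac$ with $k-l$ sections of $\Ec$, so $\Fc^{l+1}\subseteq\Fc^l$.

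Next I would produce a \emph{canonical} surjection $\wedge^l\Ac\otimes\wedge^{k-l}\Bc\twoheadrightarrow \Fc^l/\Fc^{l+1}$. The key observation is that the composite $\wedge^l\Ac\otimes\wedge^{k-l}\Ec\to\Fc^l\to\Fc^l/\Fc^{l+1}$ annihilates any element whose last $k-l$ factors also lie in $\Ac$, since such an element lands in $\Fc^{l+1}$. Hence the composite factors through the surjection $\wedge^{k-l}\Ec\twoheadrightarrow\wedge^{k-l}\Bc$ in the second tensor factor, yielding a well-defined $\Oc_X$-linear map $\wedge^l\Ac\otimes\wedge^{k-l}\Bc\to\Fc^l/\Fc^{l+1}$ that is manifestly surjective. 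Because this map is assembled purely from the morphisms in the short exact sequence and the exterior product, it is canonical and depends on no auxiliary choices.

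To prove it is an isomorphism I would argue locally. Since $\Ac$, $\Ec$, $\Bc$ are locally free, the sequence $\Ac\hookrightarrow\Ec\twoheadrightarrow\Bc$ splits over a sufficiently small open set, giving $\Ec\cong\Ac\oplus\Bc$ there. The standard decomposition
\[
\wedge^k(\Ac\oplus\Bc)\cong\bigoplus_{j=0}^k \wedge^j\Ac\otimes\wedge^{k-j}\Bc
\]
then identifies $\Fc^l$ with $\bigoplus_{j\geq l}\wedge^j\Ac\otimes\wedge^{k-j}\Bc$ and the quotient $\Fc^l/\Fc^{l+1}$ with $\wedge^l\Ac\otimes\wedge^{k-l}\Bc$, compatibly with the canonical map constructed above. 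Under this local identification the map becomes the identity on a locally free sheaf, hence an isomorphism locally, and therefore globally.

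The main obstacle I anticipate is the bookkeeping in the second step: checking carefully that the putatively quotient-killing elements genuinely lie in $\Fc^{l+1}$ (so that the canonical map is well-defined), and then verifying that the local splitting isomorphism is compatible with the intrinsically defined canonical map, so that the local isomorphisms patch to a global one. Once naturality is in hand, the rest is formal exterior-algebra manipulation.
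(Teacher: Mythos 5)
Your proposal is correct, and it is worth noting that the paper itself offers no proof of this lemma: it is quoted with a \texttt{\textbackslash qed} and a citation to Hartshorne (pp.\ 127--8, where it appears as Exercise II.5.16), so your argument supplies exactly what the paper delegates to the literature, and it is the standard solution: set $\Fc^l = \img\bigl(\wedge^l\Ac\otimes\wedge^{k-l}\Ec\ra\wedge^k\Ec\bigr)$, build a canonical surjection onto the graded pieces, and verify it is an isomorphism against a local splitting $\Ec\cong\Ac\oplus\Bc$, globalizing by canonicity. One step of your second paragraph needs tightening, though. To factor the composite $\wedge^l\Ac\otimes\wedge^{k-l}\Ec\ra\Fc^l/\Fc^{l+1}$ through $\wedge^l\Ac\otimes\wedge^{k-l}\Bc$ you must kill the \emph{entire} kernel of $\wedge^l\Ac\otimes\wedge^{k-l}\Ec\ra\wedge^l\Ac\otimes\wedge^{k-l}\Bc$; since $\wedge^l\Ac$ is locally free this kernel is $\wedge^l\Ac\otimes\mathcal{K}$ with $\mathcal{K}=\ker\bigl(\wedge^{k-l}\Ec\ra\wedge^{k-l}\Bc\bigr)$, and $\mathcal{K}$ is the image of $\Ac\otimes\wedge^{k-l-1}\Ec\ra\wedge^{k-l}\Ec$ --- that is, it is generated by decomposables with \emph{at least one} factor in $\Ac$, not by those all of whose last $k-l$ factors lie in $\Ac$, as your phrasing suggests (the latter elements alone do not generate the kernel, so annihilating only them would not justify the factorization). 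The repair is immediate and within the scope of the ``bookkeeping'' you anticipated: a single factor $a\in\Ac$ among the last $k-l$ slots can be moved, up to sign, adjacent to the $\wedge^l\Ac$ block, so every generator of $\wedge^l\Ac\otimes\mathcal{K}$ genuinely lands in $\Fc^{l+1}$; and the identification $\mathcal{K}=\img\bigl(\Ac\otimes\wedge^{k-l-1}\Ec\bigr)$ is itself a local computation using the same splitting you already invoke. With that adjustment the canonical map is well defined, and the remainder of your argument --- surjectivity, the local identification of $\Fc^l$ with $\bigoplus_{j\geq l}\wedge^j\Ac\otimes\wedge^{k-j}\Bc$, and patching via naturality --- is exactly right.
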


Now fix an odd, second order deformation $\Xc$ of a super Riemann surface $\Scl = \Pi \Om^{1/2}_C$ and denote by $\Ec$ the modelling bundle of $\Xc$. As $\Xc$ is $(1|3)$-dimensional, we know that $\Ec$ will have rank $3$. From Lemma \ref{fnbfhbvhbvhbhvrhjvb} we can infer that $\Ec$ will fit into the following short exact sequence,
\[
0 \ra \Cc_C^{\oplus 2} \hookrightarrow \Ec \twoheadrightarrow \Om^{1/2}_C\ra0.
\]
where $\Cc_C$ is the structure sheaf of the underlying Riemann surface $C$. Regarding the primary obstruction class of $\Xc$, recall from Lemma \ref{pekpfk49fk94p} that it will be valued in $H^1(C, \wedge^2\Ec\otimes\Tc_C)$. Hence we are interested here in $\wedge^2\Ec$. From Lemma $\ref{f89hf89h03jfp}$ we see that $\wedge^2\Ec$ will admit a finite filtration $\wedge^2\Ec \supseteq \Fc^1\supset \Fc^2\supseteq0$ with successive quotients,
\begin{align*}
\Fc^0/\Fc^1
= 0
&&
\Fc^1/\Fc^2\cong \Om^{1/2}_C \oplus \Om^{1/2}_C;
&&
\mbox{and}
&&
\Fc^2\cong \Cc_C. 
\end{align*}
This leads to the following short exact sequence,
\begin{align*}
0 \ra \Cc_C\hookrightarrow \wedge^2\Ec \twoheadrightarrow  \Om^{1/2}_C \oplus \Om^{1/2}_C\ra0
\end{align*}
which in turn gives
\begin{align}
0 \ra \Tc_C\stackrel{\iota}{\hookrightarrow} \wedge^2\Ec\otimes\Tc_C \stackrel{p}{\twoheadrightarrow}  \Tc^{1/2}_C \oplus \Tc^{1/2}_C\ra0. 
\label{nNUIBFIUNoi}
\end{align}
Denote by $\iota_*$ and $p_*$ the corresponding induced maps on cohomology. 

Before presenting the next result, it will be useful to define the following: on each non-empty intersection $\Uc\cap \Vc$ set,
\begin{align*}
g_{\Uc\Vc} := \frac{\pt^2\vartheta^+_{\Uc\Vc}}{\pt \xi_2\pt\xi_1}\otimes \frac{\pt}{\pt y}.
\end{align*}
As defined, we have $g_{(\Ufr, \vartheta)}:=\{g_{\Uc\Vc}\} = \{g_{UV}^{12}\pt/\pt y\}$. It is a $\Tc_C$-valued 1-cochain on $\Ufr$. We have included the subscript $(\Ufr, \vartheta)$ to emphasize that this object generally depends on the atlas (see Proposition \ref{kvbbvhbvhjbyur}).

\begin{PROP}\label{5g4hg74jf989f3}
Let $\Xc$ be an odd, second order deformation of a super Riemann surface $\Scl$ with genus $g>1$ and primary obstruction $\om_\Xc$. Then,
\begin{enumerate}[(i)]
	\item 
	$p_*\om_\Xc = \img\left(\frac{1}{2}\mathrm{KS}_0(\pi_\Xc)\right)$; and
	\item If $p_*\om_\Xc= 0$, then in any superconformal atlas $(\Ufr, \vartheta)$ for $\Xc$, we have $\dt g_{(\Ufr, \vartheta)} = 0$. 
	\end{enumerate}
\end{PROP}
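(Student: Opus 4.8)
The plan is to read an explicit cocycle for $\om_\Xc$ off the superconformal atlas \eqref{y873d73y8dj2d}--\eqref{ggfgfGSVVDVDJHB}, compute its image under $p$ to obtain (i), and then feed this into the coboundary formula of Proposition \ref{kvbbvhbvhjbyur} to obtain (ii). First I would apply Lemma \ref{pekpfk49fk94p}. The odd coordinates of $\Xc$ are $\q,\xi_1,\xi_2$, so the degree-two part of the even transition \eqref{y873d73y8dj2d} supplies exactly three terms, giving
\begin{align*}
\om_{\Uc\Vc} = \big(f^1_{UV}\,\q\xi_1 + f^2_{UV}\,\q\xi_2 + g^{12}_{UV}\,\xi_{12}\big)\frac{\pt}{\pt y},
\end{align*}
which represents $\om_\Xc$ in $H^1(C,\wedge^2\Ec\otimes\Tc_C)$. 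In the filtration underlying \eqref{nNUIBFIUNoi}, the subsheaf $\iota(\Tc_C)$ is spanned by $\xi_{12}\otimes(\pt/\pt y)$, while $p$ projects onto the $\q\xi_i$-components. Hence $p$ kills the $g^{12}$-term and returns $p(\om_{\Uc\Vc}) = (f^1_{UV}\q, f^2_{UV}\q)\otimes(\pt/\pt y)$, a cocycle valued in $\Tc^{1/2}_C\oplus\Tc^{1/2}_C$ after the identification $\Om^{1/2}_C\otimes\Tc_C\cong\Tc^{1/2}_C$.

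For (i) I would compare this with the Kodaira--Spencer cocycle \eqref{ookpj02j0j0382u80jd8PMOmrm}. Each component $\mathrm{KS}^a_{\Uc\Vc;0} = f^a_{UV}\q(\pt/\pt y) + \psi^a_{UV}(\pt/\pt\eta)$ carries two summands which, via the superconformal relation $f^a_{UV} = \zeta_{UV}\psi^a_{UV}$ from \eqref{ojfoeofhoifjpie} and the identification of $\Tc^{1/2}_C$ with $\Dfr$, represent one and the same class in $H^1(C,\Tc^{1/2}_C)$. Thus $[f^a_{UV}\q(\pt/\pt y)] = \tfrac12\mathrm{KS}^a_0(\pi_\Xc)$, and assembling the two components yields $p_*\om_\Xc = \img(\tfrac12\mathrm{KS}_0(\pi_\Xc))$. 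The only bookkeeping here is tracking the factor of two produced by the two coinciding summands of the Kodaira--Spencer representative.

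For (ii) I would begin from Proposition \ref{kvbbvhbvhjbyur}, which expresses the coboundary of the $\Tc_C$-valued $1$-cochain $g_{(\Ufr,\vartheta)}$ as $\dt g_{(\Ufr,\vartheta)} = \pm(\psi^1\otimes\psi^2 - \psi^2\otimes\psi^1)$. By (i), the hypothesis $p_*\om_\Xc = 0$ is equivalent to $[\psi^1] = [\psi^2] = 0$, so the task is to show the antisymmetrised cup-product $2$-cochain vanishes \emph{identically}, and not merely in cohomology (where its vanishing is automatic since $H^2(C,\Tc_C)=0$). The key extra input is the Wronskian relation \eqref{gvhdcsvtvyiwbou39} of Corollary \ref{ueievbhjgbvbeiuno}, valid in any superconformal atlas, which forces $\psi^1_{UV}$ and $\psi^2_{UV}$ to be proportional by a locally constant factor $c_{UV}$ on each overlap. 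Substituting $\psi^1_{UV} = c_{UV}\psi^2_{UV}$ collapses the cochain to $\pm\zeta_{VW}(c_{UV} - c_{VW})\psi^2_{UV}\psi^2_{VW}(\pt/\pt z)$, so it suffices to promote the family $\{c_{UV}\}$ to a single global constant.

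I expect this promotion to be the main obstacle. I would run the cocycle condition \eqref{emoij48fjjf03jf0j30} for $\psi^1$ and $\psi^2$ simultaneously, combining it with the proportionality to obtain a linear relation among $c_{UV},c_{VW},c_{UW}$; where $\psi^2$ is nonvanishing this relation forces the constants to agree, and here the genus hypothesis $g>1$ enters decisively through $H^0(C,\Tc^{1/2}_C)=0$: the trivialisations $\psi^a = \dt\sigma^a$ guaranteed by $[\psi^a]=0$ are then unique, which I would use to control the behaviour of $c_{UV}$ across the loci where $\psi^2$ degenerates and the constant might otherwise jump. Once $c$ is shown to be globally constant, $\psi^1 = c\,\psi^2$ holds globally, the antisymmetrisation $\psi^2\otimes\psi^2 - \psi^2\otimes\psi^2$ is identically zero, and therefore $\dt g_{(\Ufr,\vartheta)} = 0$.
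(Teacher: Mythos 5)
Your part (i) is correct and is essentially the paper's argument: compare the cocycle representative \eqref{TVYTRVUYEBIY} of $\om_\Xc$ with the Kodaira--Spencer representative \eqref{ookpj02j0j0382u80jd8PMOmrm} through the filtration underlying \eqref{nNUIBFIUNoi}, with the factor $\tfrac12$ coming from $f^a_{UV}=\zeta_{UV}\psi^a_{UV}$ exactly as you describe.

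Part (ii), however, has a genuine gap at precisely the step you flag as the main obstacle. Granting the local proportionality $\psi^1_{UV}=c_{UV}\,\psi^2_{UV}$ (itself only defined on connected components of $\Uc\cap\Vc$ where $\psi^2_{UV}\not\equiv 0$; the degenerate components are never handled), the cocycle condition \eqref{emoij48fjjf03jf0j30} run for $\psi^1$ and $\psi^2$ simultaneously yields, on $\Uc\cap\Vc\cap\Wc$, a relation of the form $(c_{VW}-c_{UW})\,\psi^2_{VW}+\zeta_{VW}(c_{UV}-c_{UW})\,\psi^2_{UV}=0$. Contrary to your claim, this does \emph{not} force the constants to agree where $\psi^2$ is nonvanishing: it is equally satisfied whenever $\psi^2_{VW}$ and $\zeta_{VW}\psi^2_{UV}$ are $\Cbb$-proportional on the triple overlap, so the constants may a priori jump. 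The appeal to uniqueness of the trivialisations $\s^a$ (which is correct for $g>1$, since $H^0(C,\Tc_C^{1/2})=0$) is never connected to a computation that excludes this escape, so the promotion of $\{c_{UV}\}$ to a global constant remains unproven and your proof of (ii) is incomplete as it stands.

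The paper closes exactly this point by a different mechanism, which you never invoke computationally: since $\mathrm{KS}_0(\pi_\Xc)=0$, write $\psi^i_{UV}=\s^i_V-\zeta_{UV}\s^i_U$ and substitute this into the Wronskian identity \eqref{4f4fh4jf83j03j0}, then expand in powers of $\zeta_{UV}$. For $g>1$ the spin structure is nontrivial, so $\zeta_{UV}$ is non-constant and its powers are algebraically independent; the coefficient of $\pt\zeta_{UV}/\pt x$ in \eqref{48f89jfj30j30} then forces $\s^2_U\s^1_V-\s^1_U\s^2_V=0$ on \emph{every} overlap, i.e.\ $[\s^1,\s^2]_{UV}=0$, and the coboundary identity $[\dt\s^1,\dt\s^2]_{UVW}=[\s^1,\s^2]_{UW}-[\s^1,\s^2]_{UV}-[\s^1,\s^2]_{VW}$ of \eqref{eiurugug783} then kills $[\psi^1,\psi^2]$ identically as a cochain, giving $\dt g_{(\Ufr,\vartheta)}=0$ via Proposition \ref{kvbbvhbvhjbyur}. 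If you wish to keep your route, you must either reproduce this substitution-and-independence argument or supply some other device ruling out the proportionality loophole on triple overlaps; note that global proportionality of $\psi^1,\psi^2$ genuinely fails for general superconformal atlases (otherwise $\dt g$ would always vanish, contradicting the discussion preceding Proposition \ref{kvbbvhbvhjbyur}), so the hypothesis $p_*\om_\Xc=0$ must enter through an actual computation, as it does in the paper.
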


\begin{proof}At the level of cocycles with respect to a covering, recall the expression for the cocycle representative of the obstruction class $\om_\Xc$ in \eqref{TVYTRVUYEBIY} in Lemma \ref{pekpfk49fk94p}. Comparing this with the cocycle representative for the image of the Kodaira-Spencer class in \eqref{ookpj02j0j0382u80jd8PMOmrm} and using the relation in \eqref{ojfoeofhoifjpie} from superconformality confirms \emph{(i)}. As for \emph{(ii)}, we know by exactness of the long exact sequence on sheaf cohomology that $\om_\Xc$ will be in the image of \emph{some} class $\al$ in $H^1(C, \Tc_C)$. Given $\al$, one can take any cocycle representative of $\al$ with respect to a covering $\Ufr$, say $\{\al_{UV}\}$, and write down an atlas $(\Ufr, \vartheta)$ for $\Xc$ as in, for instance, the proof of Lemma \ref{f64gf684f3h9}. The subtlety lies in the converse statement: given any atlas $(\Ufr, \vartheta)$ for $\Xc$ with $\vartheta$ superconformal, there will generally be an obstruction to solving $\dt g_{(\Ufr, \tilde\vartheta)} = 0$ as can be seen from Proposition \ref{kvbbvhbvhjbyur}. We will argue however that this obstruction must naturally vanish. 

Firstly, by \emph{(i)} we see $p_*\om_\Xc = 0$ means $\mathrm{KS}_0(\pi_\Xc) = 0$. Hence for representatives $\psi^i = \{\psi^i_{UV}\}$ of the Kodaira-Spencer class in a covering $\Ufr$ we have $\psi^i = \dt\s^i$, for some $0$-cochain $\s^i\in C^0(\Ufr, \Tc_C^{1/2})$. We now have the following computation on triple intersections $U\cap V\cap W$:
\begin{align}
[\psi^1, \psi^2]_{UVW} &= 
[\dt \s^1, \dt\s^2]_{UVW} 
\notag
\\
&=
(\s^1_V - \s^1_U)\otimes (\s^2_W - \s^2_V) - (\s^2_V - \s^2_U)\otimes (\s^1_W - \s^1_V)
\notag
\\
&=
[\s^1, \s^2]_{UW} - [\s^1, \s^2]_{UV} - [\s^1, \s^2]_{VW}
\label{eiurugug783}
\end{align}
where $[\s^1, \s^2]_{UV} := \s^1_U\otimes \s^2_V - \s^2_U\otimes \s^1_V$. Our objective will be to argue that in fact $[\s^1, \s^2]_{UV} = 0$. To see this it will be convenient to write $\psi^i= \dt\s^i$ more explicitly. We have,
\begin{align*}
\psi^i_{UV} \frac{\pt}{\pt \eta}
=
\s^i_V\frac{\pt}{\pt \eta}
-
\s^i_U\frac{\pt}{\pt \q}
=
\left(\s_V^i - \zeta_{UV}\s^i_U\right)\frac{\pt}{\pt \eta}.
\end{align*}
Hence $\psi^i_{UV} = \s_V^i - \zeta_{UV}\s^i_U$. Now since $(\Ufr, \vartheta)$ is superconformal, we have the identity from Corollary \ref{ueievbhjgbvbeiuno} which we recall below for convenience:
\begin{align}
\frac{\pt \psi^1_{UV}}{\pt x}\psi^2_{UV}
=
\psi^1_{UV}\frac{\pt \psi^2_{UV}}{\pt x}.
\label{4f4fh4jf83j03j0}
\end{align}
This gives
\begin{align*}
\frac{\pt \psi^i_{UV}}{\pt x} 
=
\zeta^2_{UV}\frac{\pt \s_V^i}{\pt y}
- \frac{\pt \zeta_{UV}}{\pt x}\s^i_U 
- \zeta_{UV}\frac{\pt \s^i_U}{\pt x}.
\end{align*}
From \eqref{4f4fh4jf83j03j0} we then find:
\begin{align}
\frac{\pt\zeta_{UV}}{\pt x}
\left( \s^2_U\s^1_V - \s^1_U\s^2_V
\right)
+ O(\zeta_{UV})
=
0
\label{48f89jfj30j30}
\end{align}
where $O(\zeta_{UV})$ denotes terms proportional to powers of $\zeta_{UV}$. 

Now recall that $\zeta$ are the transition functions for the spin structure $\Om_C^{1/2}$. When $C$ has genus $g>1$, this bundle will be non-trivial and so $\zeta$ will be non-constant. Therefore the powers of $\zeta$ will be algebraically independent and so, in order to impose \eqref{48f89jfj30j30}, it is necessary to have $[\s^1, \s^2]_{UV} = 0$. As \eqref{4f4fh4jf83j03j0} must apply in every non-empty intersection $U\cap V$, we see that $[\s^1,\s^2] = 0$ and therefore $[\psi^1, \psi^2] = 0$ by \eqref{eiurugug783}. Part \emph{(ii)} of this proposition now follows from Proposition \ref{kvbbvhbvhjbyur}. 
\end{proof}

We learn from Proposition \ref{5g4hg74jf989f3} above that in the case where the Kodaira-Spencer map of the deformation $\Xc$ vanishes, the object $g_{(\Ufr, \vartheta)}$ will not depend on the atlas $(\Ufr, \vartheta)$. Hence $\varinjlim_\Ufr g_{(\Ufr, \vartheta)}$ will define a cohomology class $g(\pi_\Xc)\in H^1(C, \Tc^{1/2}_C)$. From the exact sequence in \eqref{nNUIBFIUNoi} we see that $\om_\Xc = \iota_* g(\pi_\Xc)$. 
We conclude now with the following result concerning the vanishing of $\om_\Xc$.

\begin{PROP}\label{fnuuieuf3h9u}
(i) If $\mathrm{KS}_0(\pi_\Xc) = 0$ and $g(\pi_\Xc) = 0$, then $\om_\Xc = 0$; (ii) Conversely, if $\Xc$ is an odd, second order deformation of a super Riemann surface $\Scl$ of genus $g> 1$, then $\om_\Xc = 0$ implies both $\mathrm{KS}_0(\pi_\Xc) = 0$ and $g(\pi_\Xc) = 0$.
\end{PROP}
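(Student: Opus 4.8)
The plan is to derive both directions from the single relation $\om_\Xc=\iota_* g(\pi_\Xc)$ recorded just before the statement, where $g(\pi_\Xc)\in H^1(C,\Tc_C)$ is the class that is well-defined precisely when $\mathrm{KS}_0(\pi_\Xc)=0$. Part (i) then costs almost nothing: the hypothesis $\mathrm{KS}_0(\pi_\Xc)=0$ makes $g(\pi_\Xc)$ meaningful and puts the relation $\om_\Xc=\iota_* g(\pi_\Xc)$ at our disposal, so the second hypothesis $g(\pi_\Xc)=0$ immediately forces $\om_\Xc=\iota_*(0)=0$.

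For the converse (ii) I would proceed in two stages. First I would show that $\om_\Xc=0$ implies $\mathrm{KS}_0(\pi_\Xc)=0$: applying the exact functor $p_*$ gives $p_*\om_\Xc=0$, and by Proposition \ref{5g4hg74jf989f3}(i) this reads $\img(\tfrac12\mathrm{KS}_0(\pi_\Xc))=0$. Since under $p_*$ the obstruction records the Kodaira--Spencer classes in the two $\Tc^{1/2}_C$-summands of \eqref{nNUIBFIUNoi}, both components must vanish, whence $\mathrm{KS}_0(\pi_\Xc)=0$. With this in hand $g(\pi_\Xc)$ is defined and the relation gives $\om_\Xc=\iota_* g(\pi_\Xc)=0$.

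The remaining and genuinely load-bearing step is to deduce $g(\pi_\Xc)=0$ from $\iota_* g(\pi_\Xc)=0$, i.e.\ to establish that $\iota_*$ is injective. I would read this off the long exact cohomology sequence of \eqref{nNUIBFIUNoi}: the kernel of $\iota_*\colon H^1(C,\Tc_C)\to H^1(C,\wedge^2\Ec\otimes\Tc_C)$ is the image of the connecting homomorphism out of $H^0(C,\Tc^{1/2}_C\oplus\Tc^{1/2}_C)$. Since $\Tc^{1/2}_C=\Om_C^{-1/2}$ has degree $1-g<0$ when $g>1$, we have $H^0(C,\Tc^{1/2}_C)=0$, so this $H^0$ vanishes, the connecting map is zero, and $\iota_*$ is injective; hence $g(\pi_\Xc)=0$. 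This degree computation is the single point at which the hypothesis $g>1$ is used, and it is the crux of the argument: were $\iota_*$ not injective --- as can happen at $g=1$, where $\Tc^{1/2}_C$ may acquire global sections --- the converse implication would fail.
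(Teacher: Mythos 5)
Your proposal is correct and follows essentially the same route as the paper: part (i) from the relation $\om_\Xc=\iota_* g(\pi_\Xc)$, the vanishing of $\mathrm{KS}_0(\pi_\Xc)$ via Proposition \ref{5g4hg74jf989f3}(i), and injectivity of $\iota_*$ in the long exact sequence of \eqref{nNUIBFIUNoi} from $H^0(C,\Tc_C^{1/2})=0$, which holds since $\Tc_C^{1/2}$ has negative degree for $g>1$. One cosmetic quibble: calling $p_*$ an ``exact functor'' is off (it is just the induced linear map on $H^1$, and any linear map sends $0$ to $0$), but this does not affect the argument.
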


\begin{proof}
Part $(i)$ of this corollary follows immediately from Proposition \ref{5g4hg74jf989f3}. As for part $(ii)$, that $\om_\Xc = 0$ implies $\mathrm{KS}_0(\pi_\Xc) = 0$ is clear. As for $\om_\Xc = 0$ implying the vanishing of $g(\pi_\Xc)$, this follows from the fact that $H^0(C, \Tc_C^{1/2}) = 0$ in genus $g> 1$ since here $\Tc_C^{1/2}$ will be a line bundle of negative degree. 
This fact implies in particular that the map induced on cohomology $\iota_* : H^1(C, \Tc_C)\ra H^1(\wedge^2\Ec\otimes\Tc_C)$ from the short exact sequence in \eqref{nNUIBFIUNoi} will be injective. Hence if $\om_\Xc = \iota_*g(\pi_\Xc)$ and $\om_\Xc$ vanishes, $g(\pi_\Xc)$ must vanish also. 
\end{proof}

\subsection{On Splitting Deformations of Second Order}

We conclude this article with the following result and its proof, concerning sufficiency conditions on splitting a deformation of second order. 

\begin{THM}\label{tgguygvg3fg83h3}
Let $\Xc \stackrel{\pi_\Xc}{\ra}\Cbb^{0|2}$ be a deformation of a super Riemann surface $\Scl$ with genus $g> 1$. Then $\Xc$ is split if and only if its primary obstruction class, $\om_\Xc$, vanishes.
\end{THM}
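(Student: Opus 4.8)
The plan is to prove the two implications separately, with the forward direction being immediate and the converse carrying the content. For the forward direction, if $\Xc$ is split then it is by definition isomorphic to its split model, and since the primary obstruction class is an isomorphism invariant vanishing on any split model (Lemma \ref{pekpfk49fk94p}), we get $\om_\Xc = 0$. For the converse, I would first reduce to the previously established structure theory: assuming $\om_\Xc = 0$ and $g>1$, Proposition \ref{fnuuieuf3h9u} gives both $\mathrm{KS}_0(\pi_\Xc) = 0$ and $g(\pi_\Xc) = 0$. The remaining task is to build an explicit splitting map by removing, via chart-wise coordinate changes, every term in the transition data \eqref{y873d73y8dj2d}--\eqref{ggfgfGSVVDVDJHB} that distinguishes $\Xc$ from its split model.

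First I would eliminate the degree-one odd terms. Since $\mathrm{KS}_0(\pi_\Xc)=0$, each $\psi^i = \{\psi^i_{UV}\}$ is a coboundary, and (as computed in the proof of Proposition \ref{5g4hg74jf989f3}) one may write $\psi^i_{UV} = \s^i_V - \zeta_{UV}\s^i_U$ for a $0$-cochain $\s^i\in C^0(\Ufr, \Tc_C^{1/2})$. On each chart I would perform the odd coordinate change $\hat\q_\Uc := \q - \s^i_\Uc\,\xi_i$; a short computation then yields $\hat\q_\Vc = \zeta_{UV}\hat\q_\Uc$ on overlaps, so the terms $\psi^i_{UV}\xi_i$ drop out of the odd transition, and by the superconformal relation $f^i = \zeta\psi^i$ in \eqref{ojfoeofhoifjpie} the even terms $f^i_{UV}\xi_i\q$ vanish simultaneously. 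This is exactly the trivialisation of the extension $0\to\Cc_C^{\oplus 2}\to\Ec\to\Om^{1/2}_C\to 0$, yielding $\Ec\cong\Om^{1/2}_C\oplus\Cc_C^{\oplus 2}$.

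Next I would eliminate the surviving degree-two and degree-three data, namely $g^{12}_{UV}\xi_{12}$ and $\zeta^{12}_{UV}\xi_{12}\q$. The crucial observation is that once $\psi^i = 0$, Corollary \ref{ueievbhjgbvbeiuno} reduces to \eqref{3gf73g6f8g38h3}, i.e.\ $\pt g^{12}_{UV}/\pt x = 2\zeta_{UV}\zeta^{12}_{UV}$, so $\zeta^{12}$ is completely slaved to $g^{12}$. By Proposition \ref{5g4hg74jf989f3}, $g_{(\Ufr,\vartheta)}$ is now a genuine $\Tc_C$-valued cocycle representing the atlas-independent class $g(\pi_\Xc)$, which vanishes; writing $g^{12}=\dt h$ for a $0$-cochain $h\in C^0(\Ufr,\Tc_C)$, I would carry out the even coordinate change $\tilde x_\Uc := x + h_\Uc\,\xi_{12}$ together with its superconformal companion on the odd coordinate (precisely the modification of Lemma \ref{f64gf684f3h9} run in reverse). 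This removes $g^{12}\xi_{12}$, and because $\zeta^{12}$ is determined by $g^{12}$ through \eqref{3gf73g6f8g38h3}, the degree-three term $\zeta^{12}\xi_{12}\q$ is removed at the same stroke. The composite coordinate change then reduces the transitions to $y = f_{UV}(x)$, $\eta = \zeta_{UV}\q$, $\xi_i = \xi_i$, which are exactly those of $\Pi(\Om^{1/2}_C\oplus\Cc_C^{\oplus 2})$, so $\Xc$ is split.

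The hard part will be this last step, and specifically the interplay between the degree-two datum $g^{12}$ and the degree-three datum $\zeta^{12}$. A priori a $(1|3)$-dimensional supermanifold carries a genuinely independent higher obstruction living in $H^1(C,\wedge^3\Ec\otimes\Ec^\vee)$, and it is only the superconformal constraint \eqref{3gf73g6f8g38h3} (valid once $\psi^i=0$) that forces $\zeta^{12}$ to be determined by $g^{12}$ and hence to vanish alongside it; this collapse of the two potential obstructions into the single class $g(\pi_\Xc)$ is the mechanism behind the theorem. I would also need to verify that the degree-one coordinate change of the second paragraph does not disturb the cocycle nature of $g^{12}$---which is guaranteed by the atlas-independence in Proposition \ref{5g4hg74jf989f3}---and that the two coordinate changes compose into a single globally defined isomorphism onto the split model.
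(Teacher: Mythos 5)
Your plan is, at its core, the paper's own proof reorganized: the paper also reduces the converse to Proposition \ref{fnuuieuf3h9u}, builds an explicit splitting cochain $\Lam$, and exploits the superconformal relation of Corollary \ref{ueievbhjgbvbeiuno} to slave $\zeta^{12}$ to $g^{12}$ (the paper's final splitting has exactly your companion term, $\phi^{12}_U = \frac{1}{2}\pt\lam^{12}_U/\pt x$). The only structural difference is packaging: the paper solves the simultaneous system of splitting equations \eqref{rf784hf79hf9893}--\eqref{4904390fk39k300} for a single cochain, where you stage two successive coordinate changes. However, there are two gaps. The first is a concrete miscalculation: the odd change $\hat\q_\Uc = \q - \s^i_\Uc\xi_i$ alone does \emph{not} remove the even terms $f^i_{UV}\xi_i\q$. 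Substituting $\q = \hat\q + \s^i_U\xi_i$ into $\rho^+_{\Uc\Vc;\xi}$ leaves $f^i_{UV}\xi_i\hat\q$ intact and additionally shifts $g^{12}_{UV}$ by $(f^1_{UV}\s^2_U - f^2_{UV}\s^1_U)$. You need a simultaneous even change $\hat x = x + \lam^i_U\xi_i\q$ with $\lam^i = \s^i$; that this single cochain serves both components is precisely the paper's identification $\lam^i = \phi^i$, justified via $f^i = \zeta\psi^i$ from \eqref{ojfoeofhoifjpie}.

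The second gap is the substantive one. Your stage two invokes Corollary \ref{ueievbhjgbvbeiuno} (to slave $\zeta^{12}$ to $g^{12}$) and Proposition \ref{5g4hg74jf989f3}(ii) (to get $\dt g_{(\Ufr,\vartheta)} = 0$) \emph{in the atlas produced by stage one}, but both results apply only to superconformal atlases, and you never verify that the stage-one coordinate change preserves superconformality. This is not automatic: the change with components $\lam^i = \phi^i = \s^i$ is superconformal precisely when $\frac{\pt\phi^1_U}{\pt x}\phi^2_U = \phi^1_U\frac{\pt\phi^2_U}{\pt x}$ on each chart, i.e.\ \eqref{rhf784hf93j98j39}. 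The paper derives exactly this (equations \eqref{4hf848f99j9}--\eqref{rhf784hf93j98j39}, then \eqref{rf489f9fj00} and \eqref{riou4gh9849j30f}) by expanding $\psi^i_{UV} = \zeta_{UV}\phi^i_U - \phi^i_V$ in the constraint \eqref{4f4fh4jf83j03j0} and using that the powers of $\zeta_{UV}$ are algebraically independent because $\zeta$ is nonconstant for $g>1$ --- this is where the genus hypothesis does its work in the splitting step, beyond its role in Proposition \ref{fnuuieuf3h9u}. Without this verification, after stage one the residual $\zeta^{12}$ could survive as an independent degree-three obstruction, which is exactly the failure mode you correctly flagged as the ``hard part'' but then dismissed by an appeal to atlas-independence that presupposes superconformality. (A minor misreading in the same vein: \eqref{3gf73g6f8g38h3} holds for \emph{any} superconformal atlas of a deformation by Corollary \ref{ueievbhjgbvbeiuno}, not only ``once $\psi^i = 0$''.)
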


\noindent
\emph{Proof}.
If $\Xc$ is split then obviously the obstruction class to splitting $\Xc$ vanishes. The main difficulty in proving the converse is in ensuring, upon assuming $\om_\Xc $ vanishes, that the \emph{second} obstruction class to splitting necessarily vanishes also. We have not discussed higher obstruction theory in this article as it would be superfluous. Instead, in what follows, we will just construct an explicit splitting map. 

Our method is to show, under the assumption $\om_\Xc =0$ and the genus $g>1$, that any superconformal atlas can be split.

\subsubsection{Preliminaries: The Splitting Equations}
We begin with the following preliminary result which allows us to justify our methods:

\begin{LEM}\label{oopejfioehuhveoij}
Let $\Xfr$ be a supermanifold.  Then $\Xfr$ is split if and only if there exists:
\begin{enumerate}[(i)]
	\item an atlas $(\Ufr, \rho)$ for $\rho$ as in \eqref{uicuibviuv78v3} and \eqref{porviorhg894h89} and;
	\item a $0$-cochain $\Lam = \{\Lam_\Uc\}_{\Uc\in \Ufr}$ of automorphisms;
\end{enumerate}
such that
 on all non-empty intersections $\Uc\cap\Vc$:
\begin{align}
\Lam_\Vc\circ \rho_{\Uc\Vc} = \hat\rho_{\Uc\Vc}\circ \Lam_\Uc
\label{dj90jf390jf903k93}
\end{align}
where $\{\hat\rho_{\Uc\Vc}\}$ are the transition functions of the split model of $\Xfr$:
\begin{align*}
\hat \rho^\mu_{\Uc\Vc}(x,\q) = f_{UV}^\mu(x)
&&\mbox{and}&&
\hat \rho_{\Uc\Vc;a}(x,\q)
=
\zeta_{UV, a}^b(x) ~\q_b.
\end{align*}
The $0$-cochain $\Lam$ is referred to as a splitting map.
\end{LEM}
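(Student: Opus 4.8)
The plan is to prove both directions of this iff, recognising that Lemma \ref{oopejfioehuhveoij} is essentially a reformulation of the definition of splitness (Definition \ref{4j9fj409gjo3jg30}) in the language of atlases and transition data. An isomorphism $\Xfr_{(M,E)}\cong\Pi E$ is exactly a collection of local isomorphisms compatible with the transition functions on both sides, which is what the cochain $\Lam$ of automorphisms encodes. So the content of the lemma is a careful unwinding of what a global isomorphism to the split model means chart-by-chart.

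First I would prove the ``only if'' direction. Assume $\Xfr$ is split, so by Definition \ref{4j9fj409gjo3jg30} there is a global isomorphism $\Phi:\Xfr\stackrel{\cong}{\ra}\Pi E$. Fix any atlas $(\Ufr,\rho)$ for $\Xfr$ with transition data as in \eqref{uicuibviuv78v3} and \eqref{porviorhg894h89}; the split model $\Pi E$ carries its canonical atlas with transition functions $\{\hat\rho_{\Uc\Vc}\}$ (the linear-in-$\q$ part of $\rho$). Restricting $\Phi$ to each chart $\Uc$ gives a local isomorphism $\Lam_\Uc$ from the $\Xfr$-chart to the $\Pi E$-chart over $\Uc$, i.e.\ an automorphism of $\Cbb^{p|q}$ (after identifying both charts with coordinate superspace). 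The compatibility of $\Phi$ with the two atlases on the overlap $\Uc\cap\Vc$ is precisely the commuting-square identity \eqref{dj90jf390jf903k93}. This direction is routine: it is just reading off the local data of a given global isomorphism.

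The harder and more substantive direction is the ``if''. Here I would start from an atlas $(\Ufr,\rho)$ and a $0$-cochain $\Lam=\{\Lam_\Uc\}$ of automorphisms satisfying \eqref{dj90jf390jf903k93} on all overlaps, and construct the global isomorphism $\Xfr\cong\Pi E$. The key point is that \eqref{dj90jf390jf903k93}, rewritten as $\hat\rho_{\Uc\Vc}=\Lam_\Vc\circ\rho_{\Uc\Vc}\circ\Lam_\Uc^{-1}$, says exactly that conjugating the $\Xfr$-transition data by the local automorphisms $\Lam$ produces the split transition data $\hat\rho$. I would argue that the locally-defined maps $\Lam_\Uc:\Xfr|_\Uc\ra\Pi E|_\Uc$ glue: on $\Uc\cap\Vc$ the two prescriptions $\Lam_\Uc$ and $\Lam_\Vc$ agree after transporting along the respective transition functions, which is what \eqref{dj90jf390jf903k93} guarantees, so they patch to a well-defined global morphism $\Lam:\Xfr\ra\Pi E$. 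Since each $\Lam_\Uc$ is an automorphism (hence locally invertible), the glued map is a local isomorphism everywhere and is the identity on reduced spaces (both are $M$), so by the inverse function theorem for supermanifolds it is a global isomorphism, witnessing that $\Xfr$ is split.

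The main obstacle in the ``if'' direction is the gluing/well-definedness argument: one must check that the $\Lam_\Uc$, which a priori are only locally defined chart isomorphisms, are genuinely the local expressions of a single global morphism of ringed spaces, and that the cocycle bookkeeping in \eqref{dj90jf390jf903k93} is exactly the condition needed for this. I expect most of the care to go into verifying that \eqref{dj90jf390jf903k93} is both necessary and sufficient for compatibility, keeping track of the $\Zbb_2$-grading and the direction of composition (the distinction between $\rho_{\Uc\Vc}$ and $\rho_{\Vc\Uc}$), and confirming that invertibility of the $\Lam_\Uc$ propagates to invertibility of the glued map on the level of structure sheaves.
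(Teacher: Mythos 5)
Your argument is correct in substance, but it takes a genuinely different route from the paper: the paper does not prove this lemma in any detail at all --- its ``proof'' is a one-line remark that the statement is a restatement of Green's classification of supermanifolds \cite{GREEN} via \v Cech cohomology of non-abelian groups, with details omitted. You instead give the direct, self-contained argument: unwinding Definition \ref{4j9fj409gjo3jg30} chart-by-chart for the ``only if'' direction, and gluing the local automorphisms for the ``if'' direction. In Green's language, the intertwining relation \eqref{dj90jf390jf903k93} says precisely that the $1$-cocycles $\{\rho_{\Uc\Vc}\}$ and $\{\hat\rho_{\Uc\Vc}\}$ are cohomologous in the non-abelian \v Cech cohomology set $H^1(M,\Gc)$, where $\Gc$ is the sheaf of automorphisms of $\wedge^\bt\Ec$; your gluing argument is exactly the verification that cohomologous cocycles define isomorphic ringed spaces. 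So the two approaches carry the same content: yours buys self-containedness and makes visible why \eqref{dj90jf390jf903k93} is the right condition, while the paper's buys brevity and situates the lemma inside the classification framework it already relies on (cf.\ Lemma \ref{pekpfk49fk94p}).

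One point you should tighten in the ``only if'' direction: a splitting map $\Phi:\Xfr\stackrel{\cong}{\ra}\Pi E$ is not guaranteed to induce the identity on the reduced space $M$; if its reduced map $\phi$ is nontrivial, then $\Phi|_\Uc$ is an isomorphism from the chart over $\Uc$ to the chart over $\phi(\Uc)$, not an automorphism over $\Uc$, and \eqref{dj90jf390jf903k93} does not directly make sense on a fixed cover. The standard fix is to normalize first: $\Phi$ induces a bundle isomorphism $E\cong\phi^*E$, and the pair $(\phi,\,\mbox{bundle iso})$ lifts canonically to an automorphism of the split model $\Pi E$; composing $\Phi$ with the inverse of this lift yields a splitting map covering $\mathrm{id}_M$, after which your chart-restriction argument goes through verbatim (refining the cover, if necessary, so that it trivializes both atlases). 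Also, your appeal to the inverse function theorem in the ``if'' direction is unnecessary: the local inverses $\Lam_\Uc^{-1}$ satisfy the same relation \eqref{dj90jf390jf903k93} with the roles of $\rho$ and $\hat\rho$ exchanged, hence glue to a global inverse directly.
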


\begin{proof}
This lemma is in essence a restatement of the original classification of supermanifolds by Green in \cite{GREEN}, using \v Cech cohomology of non-abelian groups. We omit the details here.
\end{proof}

We will firstly describe a slight generalisation of Lemma \ref{oopejfioehuhveoij} above. Given an atlas $(\Ufr, \vartheta)$, we can write the even and odd components as
\begin{align}
\rho_{\Uc\Vc;\xi}^+
&=
f_{UV} + f^i_{UV}\xi_i\q +  g_{UV}^{12}\xi_{12};~\mbox{and}
\label{efjejf894jf89j4}
\\
\rho^-_{\Uc\Vc;\xi}
&=
\zeta_{UV}\q + \psi^i_{UV}\xi_i +  \zeta_{UV}^{12}\xi_{12}.
\label{efiojfjf98jf93}
\end{align}
We aim to compare this with another atlas $(\Ufr, \tilde\vartheta)$, where $\tilde\vartheta = \{\tilde\rho_{\Uc\Vc;\xi}\}$ is given by 
\begin{align*}
\tilde\rho_{\Uc\Vc;\xi}^+
&=
f_{UV} +\tilde f^i_{UV}\xi_i\q + \tilde g_{UV}^{12}\xi_{12};~\mbox{and}
\\
\tilde\rho^-_{\Uc\Vc;\xi}
&=
\zeta_{UV}\q + \tilde\psi^i_{UV}\xi_i +  \tilde\zeta_{UV}^{12}\xi_{12}.
\end{align*}
Let $\Lam = \{\Lam_\Uc\}$ be a $0$-cochain of automorphisms of $\Ufr$, where
\begin{align*}
\Lam_\Uc^+
= x + \lam_U^i\xi_i\q + \lam_U^{12}\xi_{12}
&&
\mbox{and}
&&
\Lam^-_\Uc
=
\q + \phi_U^i\xi_i + \phi_U^{12}\xi_{12}\q.
\end{align*}
The atlases $(\Ufr, \vartheta)$ and $(\Ufr, \tilde\vartheta)$ will be equivalent, i.e., describe isomorphic supermanifolds, iff there exists a $0$-cochain $\Lam$ as above such that 
\begin{align}
\Lam_\Vc^\pm\circ \rho_{\Uc\Vc} = \tilde\rho_{\Uc\Vc}^\pm\circ \Lam_\Uc
\label{r8f8489fj38fj30}
\end{align}
on all non-empty intersections $\Uc\cap\Vc$.

In expanding the left- and right-hand sides of \eqref{r8f8489fj38fj30} respectively we find, for the even component:
\begin{align*}
\mathrm{LHS}_{\eqref{r8f8489fj38fj30}^+}
&=
f_{UV} + \left( f_{UV}^j + \zeta_{UV}\lam^j_V\right)\xi_j\q
+
\left( 
\left(\lam^1_V\psi^2_{UV} - \lam^2_V\psi^1_{UV}\right)
+ g^{12}_{UV}+ \lam^{12}_V
\right)\xi_{12};
\\
\mathrm{RHS}_{\eqref{r8f8489fj38fj30}^+}
&=
f_{UV}
+
\left(\frac{\pt f_{UV}}{\pt x}\lam^j_U + \tilde f_{UV}^j\right)\xi_j\q
+
\left(
\tilde f^1_{UV}\phi^2_U - \tilde f_{UV}^2\phi^1_U
+
\tilde g^{12}_{UV}
+
\frac{\pt f_{UV}}{\pt x}\lam^{12}_U 
\right)
\xi_{12}.
\end{align*}
And for the odd component we have:
\begin{align*}
\mathrm{LHS}_{\eqref{r8f8489fj38fj30}^-}
&=
\zeta_{UV}\q + \left(\psi^j_{UV} + \phi_V^j\right)\xi_j
\\
&~-
\left(
\left(
\frac{\pt \phi^1_V}{\pt x}f^2_{UV} - \frac{\pt \phi^2_V}{\pt x}f^1_{UV}
\right)
-\zeta^{12}_{UV}
-\zeta_{UV}\phi^{12}_V
\right)\xi_{12}\q;
\\
\mathrm{RHS}_{\eqref{r8f8489fj38fj30}^-}
&=
\zeta_{UV}\q
+
\left(\zeta_{UV}\phi^j_U + \tilde\psi_{UV}^j\right)\xi_j
\\
&~-
\left(
\left(
\frac{\pt \tilde\psi_{UV}^1}{\pt x}\lam^2_U
-
\frac{\pt\tilde\psi^2_{UV}}{\pt x}\lam^1_U
\right)
-
\tilde\zeta^{12}_{UV}
-
\frac{\pt \zeta_{UV}}{\pt x}\lam_U^{12}
-
\zeta_{UV}\phi^{12}_U
\right)
\xi_{12}\q.
\end{align*}
Then in order to solve \eqref{r8f8489fj38fj30}, we will need to solve the following collection of equations for the coefficients of the transition functions $\{\tilde\rho_{\Uc\Vc;\xi}\}$:
\begin{align}
\tilde f^j_{UV}
-
f^j_{UV}
&= \zeta_{UV}\lam^j_V - \frac{\pt f_{UV}}{\pt x}\lam^j_U;
\label{rf784hf79hf9893}
\\
\tilde \psi^j_{UV}
-
\psi^j_{UV}
&=
\phi^j_V - \zeta_{UV}\phi^j_U;
\label{eijfj3fj0303}
\\
\tilde g^{12}_{UV}
-
g^{12}_{UV}
&=
\lam^{12}_V- \frac{\pt f_{UV}}{\pt x}\lam^{12}_U\
\notag
\\
&~
+
\left(\lam^1_V\psi^2_{UV} - \lam^2_V\psi^1_{UV}\right)
-
\left(
\tilde f^1_{UV}\phi^2_U
-
\tilde f^2_{UV}\phi^1_U
\right);
\label{4f78hf94hf98f03}
\\
\tilde\zeta^{12}_{UV}
-
\zeta^{12}_{UV}
&=
\zeta_{UV}\phi^{12}_V - \zeta_{UV}\phi^{12}_U
-\frac{\pt \zeta_{UV}}{\pt x}\lam_U^{12}
\notag
\\
&~
+
\left(
\left(\frac{\pt \tilde\psi_{UV}^1}{\pt x}\lam^2_U - \frac{\pt \tilde\psi^2_{UV}}{\pt x}\lam^1_U\right)
-
\left(
\frac{\pt \phi^1_V}{\pt y}f^2_{UV} - \frac{\pt\phi^2_V}{\pt y}f^1_{UV}\right)
\right),
\label{4904390fk39k300}
\end{align}
for $j= 1, 2$. We refer to \eqref{rf784hf79hf9893}---\eqref{4904390fk39k300} as \emph{splitting equations}. The equations in \eqref{rf784hf79hf9893} and \eqref{eijfj3fj0303} are cohomological relations. In the case where these atlases correspond to deformations of super Riemann surfaces $\Xc$ and $\tilde\Xc$ respectively, \eqref{rf784hf79hf9893} and \eqref{eijfj3fj0303} are equivalent to the constraint $\mathrm{KS}_0(\pi_\Xc) = \mathrm{KS}_0(\pi_{\tilde\Xc})$. The other two equations are a little more mysterious. However, for second order deformations of super Riemann surfaces, they admit a nice simplification. 

\subsubsection{On Deformations of Second Order}
Let $\Xc$ be a second order deformation of a super Riemann surface and suppose $(\Ufr, \vartheta)$ is a superconformal atlas for $\Xc$. We assume firstly that the genus $g$ of the super Riemann surface satisfies $g>1$; and secondly that $\mathrm{KS}_0(\pi_\Xc) = 0$.
  Recall the following constraint from Corollary \ref{ueievbhjgbvbeiuno}:
\begin{align}
\frac{\pt \psi^1_{UV}}{\pt x}\psi^2_{UV}
=
\psi^1_{UV}\frac{\pt \psi^2_{UV}}{\pt x}.
\label{4f4fh4jf83j03j0}
\end{align}
Under the assumption of a vanishing Kodaira-Spencer class, this identity was used to prove Proposition \ref{5g4hg74jf989f3}. We will make use of it here also. With  $\psi^i_{UV}
=\zeta_{UV}\phi^i_U - \phi^i_V$ the constraint in \eqref{4f4fh4jf83j03j0} leads to the following equation:
\begin{align}
0
&=
\frac{\pt\zeta_{UV}}{\pt x}
\left( \phi^2_U\phi^1_V - \phi^1_U\phi^2_V
\right)
\label{4hf848f99j9}
\\
&~+
\zeta_{UV}
\left(
\frac{\pt \phi^2_U}{\pt x}
\phi^1_V
-
\frac{\pt \phi^1_U}{\pt x}\phi^2_V
\right)
\label{4rh974hf9f8j303j}
\\
&~+
\zeta^2_{UV}
\left[
\left( \frac{\pt \phi^1_U}{\pt x}\phi^2_U - \phi^1_U\frac{\pt \phi^2_U}{\pt x}\right)
+ 
\left(
\frac{\pt \phi^1_V}{\pt y}\phi^2_V
-
\phi^1_V\frac{\pt \phi^2_V}{\pt y}
\right)
\right]
\\
&~+
\zeta^3_{UV}
\left(
\phi^1_U\frac{\pt \phi^2_V}{\pt y}
-
\phi^2_U\frac{\pt \phi^1_V}{\pt y}
\right).
\label{rjf8jf83j0302}
\end{align}
As argued in Proposition \ref{5g4hg74jf989f3}, in order for the above equation to hold it is necessary for each term proportional to powers of $\zeta_{UV}$ vanish. Since \eqref{4f4fh4jf83j03j0} holds on all intersections $U\cap V$, then so do the equations \eqref{4hf848f99j9}---\eqref{rjf8jf83j0302}. In looking at these equations on a self-intersection $U\cap U$ (resp. $V\cap V$) note that \eqref{4rh974hf9f8j303j} and \eqref{rjf8jf83j0302} will imply:\begin{align}
\frac{\pt \phi^1_U}{\pt x}\phi^2_U - \phi^1_U\frac{\pt \phi^2_U}{\pt x} = 0
&&
\mbox{and}
&&
\frac{\pt \phi^1_V}{\pt y}\phi^2_V
-
\phi^1_V\frac{\pt \phi^2_V}{\pt y} = 0. 
\label{rhf784hf93j98j39}
\end{align}
The above observations have the following consequences on the splitting equations. 

Concerning the terms in \eqref{4f78hf94hf98f03}, see that:
\begin{align}
\lam^1_V\psi^2_{UV} - \lam^2_V\psi^1_{UV}
&= 
\phi_V^1 \left( \zeta_{UV}\phi^2_U - \phi^2_V\right)
-
\phi^2_V\left(  \zeta_{UV}\phi^1_U - \phi^1_V\right)
\notag
\\
&=
\zeta_{UV} \left( \phi^1_V\phi^2_U - \phi^2_V\phi^1_V\right)
\notag
\\
&= 0&&\mbox{by \eqref{4hf848f99j9}.}
\label{rf489f9fj00}
\end{align}
We have used that $\lam^1 = \phi^1$ since these are solutions to the coboundary equations in \eqref{rf784hf79hf9893} and \eqref{eijfj3fj0303} for the Kodaira-Spencer representatives $f^i$ and $\psi^i$; and moreover that we identify $f^i$ and $\psi^i$ by superconformality in \eqref{ojfoeofhoifjpie}.  

Regarding the terms in \eqref{4904390fk39k300} we have,
\begin{align}
\frac{\pt \phi^1_V}{\pt y}f^2_{UV} - \frac{\pt\phi^2_V}{\pt y}f^1_{UV}
&=
\frac{\pt \phi^1_V}{\pt y}
\left(
\zeta_{UV}^2\lam_U^2 - \zeta_{UV}\lam^2_V
\right)
-
\frac{\pt \phi^2_V}{\pt y}
\left(
\zeta_{UV}^2\lam_U^1 - \zeta_{UV}\lam^1_V
\right)
\notag
\\
&=
\zeta_{UV}
\left(
\frac{\pt \phi^1_V}{\pt y}\phi^1_V - \frac{\pt \phi^1_V}{\pt y}\phi^2_V
\right)
-
\zeta^2_{UV}
\left(
\phi^1_U\frac{\pt \phi^2_V}{\pt y}
-
\phi^2_U\frac{\pt \phi^1_V}{\pt y}
\right)
\notag
\\
&=
0,
\label{riou4gh9849j30f}
\end{align}
which follows from \eqref{rjf8jf83j0302} and \eqref{rhf784hf93j98j39}. 

\subsubsection{The Splitting Map for Deformations}
We have seen what the splitting equations are. In addition to them, the \emph{splitting conditions} are the conditions under which a splitting map will exist. Such conditions were identified in Lemma \ref{oopejfioehuhveoij} and they can be read off directly from the splitting equations \eqref{rf784hf79hf9893}---\eqref{4904390fk39k300} by taking $\tilde\vartheta$ to be split (i.e., setting $\tilde f^i = \tilde \psi^i = \tilde g^{12} = \tilde\zeta^{12} = 0$). As a corollary of our calculations in \eqref{rf489f9fj00} and \eqref{riou4gh9849j30f} we have:

\begin{COR}\label{fj4fj894jf83333}
Let $\Xc$ be an odd, second order deformation of a super Riemann surface of genus $g>1$ and suppose $(\Ufr, \vartheta)$ is a superconformal atlas for $\Xc$. The splitting conditions for this atlas are:
\begin{align}
f^j_{UV}
&= 
\frac{\pt f_{UV}}{\pt x}\lam^j_U
-
\zeta_{UV}\lam^j_V ;
\label{eneuvheiheo}
\\
\psi^j_{UV}
&=
\zeta_{UV}\phi^j_U
-
\phi^j_V ;
\\
g^{12}_{UV}
&=
 \frac{\pt f_{UV}}{\pt x}\lam^{12}_U- \lam^{12}_V;
 \label{eiehfiuehioejoiejo}
\\
\zeta^{12}_{UV}
&=
\zeta_{UV}\phi^{12}_U - \zeta_{UV}\phi^{12}_V
+
\frac{\pt \zeta_{UV}}{\pt x}\lam_U^{12}
\label{vuyrgvuyrguyvheiu}
\end{align}
\qed
\end{COR}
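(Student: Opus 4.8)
The plan is to obtain the four splitting conditions by specializing the general splitting equations \eqref{rf784hf79hf9893}---\eqref{4904390fk39k300} to the case in which the comparison atlas $(\Ufr, \tilde\vartheta)$ is the split model, and then to discard the residual quadratic correction terms using the two computations \eqref{rf489f9fj00} and \eqref{riou4gh9849j30f} carried out just above the statement. Concretely, by Lemma \ref{oopejfioehuhveoij} a splitting map is precisely a $0$-cochain $\Lam = \{\Lam_\Uc\}$ of automorphisms intertwining $\vartheta$ with the transition data of $\Pi\Ec$; this is the content of \eqref{r8f8489fj38fj30} with $\tilde\vartheta$ split. So the first step is simply to set $\tilde f^i = \tilde\psi^i = \tilde g^{12} = \tilde\zeta^{12} = 0$ throughout \eqref{rf784hf79hf9893}---\eqref{4904390fk39k300}.

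First I would treat the two linear conditions. With $\tilde f^j = 0$ equation \eqref{rf784hf79hf9893} rearranges directly to \eqref{eneuvheiheo}, and with $\tilde\psi^j = 0$ equation \eqref{eijfj3fj0303} rearranges directly to the $\psi^j$-condition; no correction terms appear because these equations are already linear in the unknowns $\lam^j, \phi^j$. Next I would turn to \eqref{4f78hf94hf98f03}. Setting $\tilde g^{12} = \tilde f^i = 0$ leaves the linear part $\frac{\pt f_{UV}}{\pt x}\lam^{12}_U - \lam^{12}_V$ together with the quadratic remainder $\lam^1_V\psi^2_{UV} - \lam^2_V\psi^1_{UV}$. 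Invoking that $\lam^i = \phi^i$ (as solutions of the Kodaira-Spencer coboundary equations, identified via superconformality \eqref{ojfoeofhoifjpie}), the computation \eqref{rf489f9fj00} shows this remainder vanishes, yielding \eqref{eiehfiuehioejoiejo}. The fourth condition is entirely parallel: in \eqref{4904390fk39k300} the substitution $\tilde\psi^i = \tilde\zeta^{12} = 0$ annihilates the term $\frac{\pt \tilde\psi^1_{UV}}{\pt x}\lam^2_U - \frac{\pt\tilde\psi^2_{UV}}{\pt x}\lam^1_U$ and leaves only $\frac{\pt \phi^1_V}{\pt y}f^2_{UV} - \frac{\pt\phi^2_V}{\pt y}f^1_{UV}$, which vanishes by \eqref{riou4gh9849j30f}, giving \eqref{vuyrgvuyrguyvheiu}.

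The substance of the argument, and the step I expect to matter most, is the vanishing of these two quadratic corrections; everything else is formal specialization. This is exactly where the hypotheses of the corollary are consumed: both \eqref{rf489f9fj00} and \eqref{riou4gh9849j30f} rest on the superconformal constraint \eqref{4f4fh4jf83j03j0} of Corollary \ref{ueievbhjgbvbeiuno} and on the consequences \eqref{4hf848f99j9}---\eqref{rjf8jf83j0302} and \eqref{rhf784hf93j98j39}, which were extracted by the algebraic independence of the powers of $\zeta_{UV}$. That independence holds only because $\Om^{1/2}_C$ is a nontrivial line bundle in genus $g>1$, so that $\zeta_{UV}$ is non-constant. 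I would therefore underline that the clean form of the splitting conditions is genuinely a corollary of the preceding structural analysis rather than a purely syntactic rewriting: outside the superconformal, higher-genus regime the quadratic terms would survive and \eqref{eiehfiuehioejoiejo} and \eqref{vuyrgvuyrguyvheiu} would acquire extra corrections.
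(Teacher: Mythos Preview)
Your proposal is correct and follows the paper's own argument essentially verbatim: the paper presents the corollary as an immediate consequence of the calculations \eqref{rf489f9fj00} and \eqref{riou4gh9849j30f}, obtained by specializing the splitting equations \eqref{rf784hf79hf9893}--\eqref{4904390fk39k300} to $\tilde f^i = \tilde\psi^i = \tilde g^{12} = \tilde\zeta^{12} = 0$ and then discarding the quadratic remainders, exactly as you describe. Your identification of where the hypotheses $g>1$ and superconformality enter (via the algebraic independence of powers of $\zeta_{UV}$ underlying \eqref{4hf848f99j9}--\eqref{rjf8jf83j0302}) matches the paper's reasoning precisely.
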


We begin now with the assumptions in the statement of Theorem \ref{tgguygvg3fg83h3}.
Fix an odd, second order deformation $\Xc$ of a super Riemann surface $\Scl$ of genus $g> 1$ and suppose $\om_\Xc = 0$. We want to deduce then that $\Xc$ is in fact split. In order to do so, we need to show that $\Xc$ admits a split atlas. Since $\Xc$ is a deformation of a super Riemann surface, it will admit a superconformal atlas $(\Ufr, \vartheta)$ with $\vartheta = \{\rho_{\Uc\Vc;\xi}\}$ as in \eqref{efjejf894jf89j4} and \eqref{efiojfjf98jf93}. In Corollary \ref{fj4fj894jf83333} we find the splitting conditions for $\Xc$. Observe that \eqref{eneuvheiheo}---\eqref{eiehfiuehioejoiejo} are equivalent to the assumption $\om_\Xc = 0$ by Proposition \ref{fnuuieuf3h9u}. Hence we can solve for a $0$-cochain $\Lam = \{\Lam_\Uc\}$ such that \eqref{eneuvheiheo}---\eqref{eiehfiuehioejoiejo} will hold. It remains to show that we can then solve \eqref{vuyrgvuyrguyvheiu} and hence deduce that $\Lam$ will in fact be a splitting. 

Using again the assumption that $(\Ufr, \vartheta)$ is superconformal, recall that 
\begin{align*}
\zeta_{UV}^{12}
&=
\frac{1}{2}\zeta_{UV}^{-1}\frac{\pt g^{12}_{UV}}{\pt x}
&&\mbox{from Corollary \ref{ueievbhjgbvbeiuno};}
\\
&=
\frac{1}{2}
\zeta_{UV}
\left(
\frac{\pt \lam_U^{12}}{\pt x}
-
\frac{\pt \lam_V^{12}}{\pt y}
\right)
+
\frac{\pt \zeta_{UV}}{\pt x}\lam_U^{12}
&&
\mbox{from \eqref{eiehfiuehioejoiejo}}.
\end{align*}
In comparing now with \eqref{vuyrgvuyrguyvheiu} we see that $\Lam= \{\Lam_\Uc\}$ will be a splitting iff we can solve:
\begin{align*}
\zeta_{UV}
\left(
\phi^{12}_U 
-
\frac{1}{2}\frac{\pt \lam_U^{12}}{\pt x}
\right)
=
\zeta_{UV}
\left(
\phi^{12}_V
-
\frac{1}{2}
\frac{\pt \lam_V^{12}}{\pt y}
\right).
\end{align*}
This is of course possible since the $0$-cochain $\{\phi^{12}_U\}$ is a priori unconstrained by the assumption $\om_\Xc = 0$. 

We conclude that any superconformal atlas $(\Ufr, \vartheta)$ for a deformation $\Xc\stackrel{\pi_\Xc}{\ra}\Cbb^{0|2}$ of a super Riemann surface with genus $g>1$ can be split. The splitting is explicitly,
\begin{align*}
\Lam_\Uc^+
=
x + \lam_U^i\xi_i\q
+
 \lam_U^{12}\xi_{12}
&&
\mbox{and}
&&
\Lam_\Uc^-
=
\q 
+
\lam_U^i\xi_i
+ 
\frac{1}{2}\frac{\pt \lam^{12}_U}{\pt x}\xi_{12}\q.
\end{align*}
Since $\Xc$ will always admit a superconformal atlas up to isomorphism, it follows that spltting a superconformal atlas for $\Xc$ will define an isomorphism of $\Xc$ with its split model.

This concludes the proof of Theorem \ref{tgguygvg3fg83h3}. 
\qed

\section{Concluding Remarks}

\noindent
In this article we have studied the infinitesimal and second order deformations of a super Riemann surface as complex supermanifolds. Our methods were largely \v Cech-theoretic and sheaf-cohomological. However, they can just as easily be described using a Dolbeault model. Concerning the second order deformations, such a model was presented in \cite{DW2}. There, a gauge-invariant pairing between the deformation class of $\Xc$ (which here would be related to the primary obstruction $\om_\Xc$) and the primary obstruction class of supermoduli space $\Mfr_g$ is constructed. It would be interesting to see if this pairing can be extended to deformations $\Xc$ of any order $n>2$; and whether it might be more clearly related to the primary obstruction class of $\Xc$.
\\\\
Our focus in this article has been on the primary obstruction class $\om_\Xc$. When $\Xc$ is an odd deformation of a super Riemann surface $\Scl$ and of second order, it is $(1|3)$-dimensional as a complex supermanifold. This means Lemma \ref{ejf9hf98h839f3830} does not apply and so $\om_\Xc$ need not classify $\Xc$. Hence it is perhaps surprising to find, when the genus $g$ of $\Scl$ satisfies $g> 1$, that $\om_\Xc$ will nevertheless classify $\Xc$ in the sense of Theorem \ref{tgguygvg3fg83h3}. This leads therefore to a natural question regarding deformations of higher order.

\begin{QUE}\label{fh39hf9839323}
Let $\Xc\stackrel{\pi_\Xc}{\ra}\Cbb^{0|n}$, for $n> 2$, be a deformation of a super Riemann surface $\Scl$ with genus $g> 1$. Suppose moreover that the primary obstruction class to splitting $\Xc$ vanishes. Then is $\Xc$ split as a supermanifold?
\end{QUE}

Regarding the assumption on the genus, our arguments hold more generally for genus $g\neq 1$. The main problem in genus $g = 1$ is that $\Tc_C^{1/2}$ will be a line bundle of degree zero. Hence it could be trivial in which case the argument in Proposition \ref{5g4hg74jf989f3} need no longer hold. Furthermore if $\Tc^{1/2}_C$ is trivial, then $H^0(C, \Tc^{1/2}_C)\cong \Cbb$ which means the following piece of the long exact sequence on cohomology
\begin{align*}
\cdots \lra H^0(\Tc_C^{1/2}\oplus \Tc_C^{1/2}) \lra H^1(\Tc_C) \stackrel{\iota_*}{\lra} H^1(\wedge^2\Ec\otimes \Tc_C) \lra \cdots
\end{align*}
need not imply $\iota_*$ will be injective. As such we cannot conclude $\om_\Xc = 0\Rightarrow g(\pi_\Xc) = 0$ as in Proposition \ref{fnuuieuf3h9u}. It is thus natural to pose the following question:

\begin{QUE}
Does there exist an odd, second order, non-split deformation of a genus one super Riemann surface whose primary obstruction class vanishes?
\end{QUE}

Such a deformation affirming the above question would be an interesting and instructive illustration of higher obstruction theory for supermanifolds.

\small
\bibliographystyle{alpha}
\bibliography{Bibliography}

\hfill
\\
\noindent
\textsc{
Kowshik Bettadapura
\\\\
Mathematical Sciences Institute, Australian National University, Canberra, ACT 2601, Australia
\\\\
Yau Mathematical Sciences Centre, Tsinghua University, Haidian, Beijing 100084, China}
\\\\
\emph{E-mail address:} \href{mailto:kowshik@mail.tsinghua.edu.cn}{kowshik@mail.tsinghua.edu.cn}

\end{document}